\newtheorem{lemma}{Lemma}
\newtheorem{proposition}{Proposition}
\newtheorem{corollary}{Corollary}
\newtheorem{fact}{Fact}
\newtheorem{definition}{Definition}
\newtheorem{remark}{Remark}
\newtheorem{assumption}{Assumption}
\def\begcen{\begin{center}}
\def\endcen{\end{center}}
\newcommand{\col}{ \mbox{col} }
\newcommand{\rank}{ \mbox{rank } }
\def\caly{{\cal Y}}
\def\calg{{\cal G}}
\def\calh{{\cal H}}
\def\calh{{\cal H}}
\def\calp{{\mathfrak p}}
\def\calq{{\mathfrak q}}
\def\hal{{1 \over 2}}
\def\tilthe{\tilde{\eta}}
\def\L2{{\cal L}_2}
\def\L2e{{\cal L}_{2e}}
\def\rea{\mathbb{R}}
\def\adj{\mbox{adj}}
\def\diag{\mbox{diag}}
\def\tilthe{\tilde{\theta}}
\def\begmat#1{\begin{bmatrix}#1\end{bmatrix}}
\def\begali#1{\begin{align}{#1}\end{align}}
\def\begalis#1{\begin{align*}{#1}\end{align*}}
\def\begequarr{\begin{eqnarray}}
\def\endequarr{\end{eqnarray}}
\def\begequarrs{\begin{eqnarray*}}
\def\endequarrs{\end{eqnarray*}}
\def\begarr{\begin{array}}
\def\endarr{\end{array}}
\def\begequ{\begin{equation}}
\def\endequ{\end{equation}}
\def\lab{\label}
\def\begdes{\begin{description}}
\def\enddes{\end{description}}
\def\begenu{\begin{enumerate}}
\def\begite{\begin{itemize}}
\def\endite{\end{itemize}}
\def\endenu{\end{enumerate}}
\def\lef[{\left[\begin{array}}
\def\rig]{\end{array}\right]}
\def\begcen{\begin{center}}
\def\endcen{\end{center}}
\def\begrem{\begin{remark}\rm}
\def\endrem{\end{remark}}
\def\begassum{\begin{assumption}}
\def\endassum{\end{assumption}}
\def\begassums{\begin{assumption*}}
\def\endassums{\end{assumption*}}
\def\begassu{\begin{ass}}
\def\endassu{\end{ass}}
\def\beglem{\begin{lemma}}
\def\endlem{\end{lemma}}
\def\begcor{\begin{corollary}}
\def\endcor{\end{corollary}}
\def\begfac{\begin{fact}}
\def\endfac{\end{fact}}
\def\TAC{{\it IEEE Trans. Automat. Contr.}}
\def\AUT{{\it Automatica}}
\def\SCL{{\it Systems and Control Letters}}
\def\tilthe{\tilde{\theta}}
\def\L2e{{\cal L}_{2e}}
\def\rea{\mathbb{R}}
\def\intnum{\mathbb{Z}}
\def\diag{\mbox{diag}}
\def\adj{\mbox{adj}}
\def\col{\mbox{col}}
\def\hal{{1 \over 2}}
\def\diag{\mbox{diag}}
\def\rank{\mbox{rank}\;}
\def\AJC{{\it Asian Journal of Control}}
\def\ARC{{\it Annual Reviews in Control}}
\def\IJACSP{{\it Int. J. on Adaptive Control and Signal Processing}}
\def\TAC{{\it IEEE Trans. Automatic Control}}
\def\EJC{{\it European Journal of Control}}
\def\SCL{{\it Systems \& Control Letters}}
\def\AUT{{\it Automatica}}
\def\CSM{{\it IEEE Control Systems Magazine}}
\def\CSL{{\it IEEE Control Systems Letters}}
\def\reapos{\mathbb{R}_{>0}}
\def\intnum{\mathbb{Z}}
\def\vect{\mbox{vec}}
\newcommand{\vecop}{\operatorname{vec}}
\def\begsubequ{\begin{subequations}}
	\def\endsubequ{\end{subequations}}
\begin{document}

\begin{frontmatter}
\title{A New Least Squares Parameter Estimator for Nonlinear Regression Equations with Relaxed Excitation Conditions and Forgetting Factor}
\author[ITAM,ITMO]{Romeo Ortega}\ead{romeo.ortega@itam.mx}
\author[ITAM]{Jose Guadalupe Romero}\ead{jose.romerovelazquez@itam.mx}
\author[SUP,ITMO]{Stanislav Aranovskiy}\ead{stanislav.aranovskiy@centralesupelec.fr}
\address[ITAM]{Departamento Acad\'{e}mico de Sistemas Digitales, ITAM, Progreso Tizap\'an 1, Ciudad de M\'exico, 04100, M\'{e}xico}
\address[SUP]{IETR--CentaleSup\'elec, Avenue de la Boulaie, 35576 Cesson-S\'evign\'e, France}
\address[ITMO]{Department of Control Systems and Robotics, ITMO University, Kronverkskiy av. 49, Saint Petersburg, 197101, Russia}

\begin{abstract}
In this note a new high performance least squares parameter estimator is proposed. The main features of the estimator are: (i) global exponential convergence is guaranteed for all  {\em identifiable} linear regression equations; (ii) it incorporates a {\em forgetting factor} allowing it to preserve alertness to time-varying parameters; (iii) thanks to the addition of a mixing step it relies on a set of {\em scalar} regression equations ensuring a superior transient performance; (iv) it is applicable to nonlinearly parameterized regressions verifying a {\em monotonicity} condition and to a class of systems with {\em switched time-varying} parameters; (v) it is shown that it is {\em bounded-input-bounded-state} stable with respect to additive disturbances; (vi) continuous and discrete-time versions of the estimator are given. The superior performance of the proposed estimator is illustrated with a series of examples reported in the literature. 
\end{abstract}
%
\begin{keyword}
Parameter estimation, Least squares identification algorithm,
Nonlinear regression model, Exponentially convergent identification.
\end{keyword}

\end{frontmatter}

\section{Introduction}
\lab{sec1}
%
We have witnessed in the last few years an increasing interest in the analysis and design of new parameter estimators for linearly paramterized regression equations (LPRE) of the form $y(t)=\phi^\top(t)\theta$, with $y(t) \in \rea,\;\phi(t)\in \rea^q$ measurable signals and $\theta \in \rea^q$ a constant vector of unknown parameters.\footnote{We consider the case of {\em scalar} $y(t)$ to simplify the notation---as will be seen below all results can be directly extended for the case of vector $y(t)$.} The main motivation of this research is to relax the highly restrictive assumption of {\em persistent excitation} (PE) imposed to guarantee global exponential convergence of classical gradient, least squares (LS) or Kalman-Bucy algorithms \cite{GOOSINbook,SASBODbook,TAObook}. A second important motivation is to provide guaranteed good transient performance behavior since the one of the aforementioned schemes is  highly unpredictable and only a weak monotonicity property of the {\em norm} of the vector of estimation errors can be insured. 
\subsection{Review of recent literature on LS estimators}
\lab{subsec11}
%
It has recently been shown \cite{BARORT,PRA17} that global asymptotic convergence---but not exponential---of the error equation for standard continous-time (CT) gradient estimators is ensured under a strictly weaker condition of {\em generalized PE}---see \cite{BARORT,YIORT} for its definition and \cite{EFIBARORT} for some robustness properties of the algorithm. Unfortunately, this condition is still extremely restrictive to be of practical use. In \cite{BRUGOEBER} it is shown that the classical discrete-time (DT) LS algorithm is asymptotically convergent if and only if the regressor $\phi(t)$ satisfies a new excitation property, called {\em weak PE}, that is strictly weaker than PE. This result is of limited interest because, on one hand,  the definition is extremely technical and difficult to verify in applications. On the other hand, and more importantly, the analysis is limited to standard LS, without forgetting factor or covariance resetting that, as is well-known \cite{GOOSINbook,SASBODbook}, has a decreasing adaptation gain,  loosing its alertness to track parameter variations, which is the main motivation for recursive algorithms. In \cite{MARTOM} the underexcited scenario where the Gram matrix of the regressor has a $q_0$-dimensional kernel, with $q_0 \leq q$, is considered. It is shown that incorporating into a CT LS (or gradient) estimator the information of a basis expanding this kernel---the columns of the matrix $N \in \rea^{q_0 \times q}$ in equation (14) or (24) whose columns $v_i \in \rea^q,\;i=1,\dots q_0$ satisfy equation (4)---it is possible to guarantee consistent estimation to its complementary space. Clearly, if the regressor is PE the dimension $q_0$ of the aforementioned kernel is zero and convergence of all the parameters is guaranteed, confirming the well-known result of the PE case. This result is related to the {\em partial convergence} property of \cite[Theorem 2.7.4]{SASBODbook} where a similar fact is proven in the context of systems identification in underexcited situations. Although of theoretical interest, the result has little practical relevance because of the impossibilty to compute on-line the matrix $N$ mentioned above. In \cite{CUIGAUANN} a slight modification of the DT LS algorithm is proposed to deal with {\em parameter variations} in the LPRE, however the main convergence (to a compact set)  results still rely on PE assumptions. {In \cite{KRls}, an LS version of the well-known {\em I\&I estimator} \cite{IIbook} is proposed while in \cite{LOZCAN} a variation of LS that defines a {\em passive operator} is proposed.}

In \cite{SHAetal} an interesting generalization of the classical CT LS with forgetting factor is introduced, where the latter is allowed to be a time varying matrix.    See \cite{SHILEE} for a similar result in DT. The convergence analysis in both papers still relies on the PE assumption. In the very recent paper \cite{BIN} a general structure to design and analyze DT LS-like recursive estimators parameterized in terms of some free functions is proposed---see equation (7). A new definition of excitation, called $t^\star$-excitation, is given in \cite[Definition 1]{BIN}. Interestingly, this definition involves not only the regressor but also two of the aforementioned functions. A particular choice of these functions yields the classical LS estimator and $t^\star$-excitation is {\em equivalent} to PE. However, other choices of these free parameters may yield weaker excitation conditions, for instance the choice given in (27). However, this selection has again the problem of driving the adaptation gain to zero, loosing the estimator alertness and it is not clear if there are other choices that do not suffer from this drawback. Another novelty of \cite{BIN} is that it incorporates a very interesting analysis of robustness to additive disturbances in the LPRE, encrypted in the input-to-state-stability property. The interested reader is referred to \cite{BIN,SHAetal,SHILEE} for a review of the extensive literature on LS with forgetting factors.   
\subsection{Relaxing the  PE condition}
\lab{subsec12}
%
A major breakthrough in the design of recurrent estimators is the proof that it is possible to establish global convergence under the extremely weak assumption of {\em interval excitation}\footnote{It should be pointed out that IE is strictly weaker than the  generalized PE of \cite{YIORT}, the weak excitation PE property of \cite{BRUGOEBER} and the $t^\star$-excitation of \cite{BIN}.} (IE)  \cite{KRERIE}---called initial excitation in  \cite{PANetal} and excitation over a finite interval in \cite{TAObook}. To the best of the authors' knowledge the first estimators where such a result was established are the {\em concurrent}  and the {\em composite learning} schemes reported in \cite{CHOetal} and \cite{PANYU}, respectively; see  \cite{ORTNIKGER} for a recent survey on new estimators. These algorithms, which incorporate the monitoring of past data to build a stack of suitable regressor vectors, are closer in spirit to {\em off-line} estimators. See also  \cite{KRAKHA,ORTproieee} for two early references where a similar idea is explored. As is well-known, the main drawback of off-line estimators is their inability to track parameter variations, which is very often the main objective in applications. This situation motivates the interest to develop {\em bona-fide} on-line estimators that relax the PE condition preserving the scheme's alertness \cite{LJUSODbook}.

New on-line estimators relying on the use of the {\em dynamic regressor extension and mixing} (DREM) technique with weaker excitation requirements have been recently proposed. DREM was first proposed in \cite{ARAetaltac} for CT and in \cite{BELetalsysid} for DT systems. In Appendix A it is recalled that the main step in the derivation of DREM estimators is the construction of a new {\em extended} LPRE $Y(t)=\Phi(t) \theta$, with $Y(t) \in \rea^q$ and $\Phi(t)\in \rea^{q \times q}$ a new {\em square matrix} regressor. Two procedures to construct the extended LPRE, reported in \cite{KRE} and \cite{LIO}, respectively,  were originally considered---for the sake of completeness both constructions are reviewed in Appendix A. The final---and critical---{\em mixing} step consists of the multiplication of this extended LPRE by the {\em adjugate} of $\Phi(t)$.\footnote{It is interesting to note that this operation was independently reported in \cite{CHEZHAbook} in the context of stochastic estimator convergence {\em analysis}.} Clearly, this operation creates a new {\em scalar} LPRE of the form $\caly_i(t)=\Delta(t) \theta_i,\;i \in \bar q$, with $\caly(t):=\adj\{\Phi(t)\}Y(t)$ and $\Delta(t):=\det\{\Phi(t)\}$ a {\em scalar} regressor, which is the essential feature of the approach.\footnote{In Appendix A DREM is applied to nonlinearly parameterized regression equation (NLPRE) of the form \eqref{nlprect}, which is also considered in this paper.} DREM estimators have been successfully applied in a variety of identification and adaptive control problems, both, theoretical and practical ones, see \cite{ORTNIKGER,ORTetaltac} for an account of some of these results. 

The convergence properties of DREM-based estimators clearly depend on the scalar regressor $\Delta(t)$. Due to the scalar nature of $\Delta(t)$, it is clear that the parameter error converges if and only if  $\Delta(t)$ is not square integrable (summable for DT systems) and convergence is exponential if and only if  $\Delta(t)$ is PE, facts that were proven in \cite{ARAetaltac}. In \cite{GERORTNIK} a DREM-based algorithm using the extended regressor of  \cite{KRE} that ensures convergence in {\em finite-time} imposing the IE assumption on $\Delta(t)$ was proposed. An interesting open question was to establish the relation of the excitation of $\Delta(t)$ and the original regressor $\phi(t)$, which was studied in \cite{KORetalecc20} and \cite{YIORT} for the extended regressors of \cite{KRE} and \cite{LIO}, respectively. The  {\em equivalence}  between PE of  $\Delta(t)$ and PE of the original regressor  $\phi(t)$ was established for both extended regressors---proving that DREM-based estimators are at least as good as standard gradient or LS schemes for excited LPRE. On the other hand, in \cite{KORetalecc20} it is shown that if $\phi(t)$ is IE then $\Delta(t)$ is also IE for the extended regressor \cite{KRE}, while in \cite{YIORT} it was shown that the scheme of \cite{LIO} ensures the stronger property that $\Delta(t)$ is bounded away from zero in an open interval $[t_c,\infty)$ with $t_c>0$. Finally, in \cite[Proposition 3]{ORTetaltac} a new extended regressor which guarantees  exponential convergence under conditions that are {\em strictly weaker} than regressor PE was presented.

Three major developments in this line of research reported recently are:
\begite
\item[(i)] the proposal in  \cite{KORetal} and  \cite{WANetal} of two new, fully on-line, DREM-based estimators where exponential convergence is established imposing only the {\em IE condition} to $\phi(t)$; 
\item[(ii)] the proof in  \cite{WANetal} that IE of the regressor $\phi(t)$ is {\em equivalent} to identifiability of the LPRE. It should be recalled that identifiability of a LPRE is the existence of $q$ linearly independent regressor vectors \cite[Definition 2]{WANetal} $\phi(t_i),\,i \in \bar q$, and is a {\em necessary and sufficient} condition for the on- or off-line estimation of the parameters \cite{GOOSINbook};
\item[(iii)] the proof in  \cite{WANetal} that the proposed estimator is applicable also to separable {\em NLPRE} of the form \eqref{nlprect}, provided the mapping $\calg(\theta)$ verifies a monotonicity condition. Estimators for this kind of NLPRE were reported before in \cite{ORTetalaut21}, but the convergence condition was expressed in terms of the scalar regressor $\Delta(t)$.
\endite

The estimators of \cite{KORetal} and  \cite{WANetal} rely on the generation of new LPRE using the main idea of {\em generalized parameter estimation based observer} (GPEBO), which is a technique to design {\em state observers} for state-affine nonlinear systems, first proposed in \cite{ORTetal_pebo} and latter generalized in  \cite{ORTetal_gpebo}. GPEBO translates the problem of state-estimation into one of {\em parameter estimation} from a LPRE. The latter is generated exploiting the well-known property \cite[Property 4.4]{RUGbook} that the trajectories of an LTV system can be expressed as linear combinations of the columns of its fundamental matrix.  Besides the addition of the computationally demanding calculation of the fundamental matrix, a potential drawback of GPEBO is that it essentially reconstructs the {\em initial conditions} of some error equation, an operation which may adversely affect the robustness of the estimator,  \cite[Remark 7]{ORTetaltac} and  \cite{ORTajc}, see also \cite{WUetal}. The procedure followed in the construction of the estimator of  \cite{KORetal} is first the application of DREM and then invoke GPEBO, hence we refer to it as D+G. On the other hand, the estimator of \cite{WANetal} uses also GPEBO and DREM, but in the {\em opposite order}, so we refer to it in the sequel as G+D.
\subsection{Contributions of the paper}
\lab{subsec13}
%
In this paper we provide an alternative to the D+G and G+D estimators that also ensures global exponential convergence under the weak assumption of IE of the original regressor $\phi(t)$. The main features of this new estimator are summarized as follows.\\

\noindent {\bf F1} In contrast to the D+G and G+D estimators that implement a gradient descent search, we use the classical LS technique, hence we refer to it in the sequel as  LS+D estimator. The superior convergence properties of LS estimators, as opposed to gradient-based, are widely recognized \cite{GOOSINbook,LJUbook,RAOTOUbook}.\\

\noindent {\bf F2} We avoid the use of the GPEBO technique but instead exploit some structural properties of the LS estimator to construct the extended regressor. This fact removes the need to calculate the computationally demanding fundamental matrix. \\

\noindent {\bf F3} Similarly to the G+D scheme, the stability mechanism and, consequently, the stability analysis of the LS+D estimator is much more transparent than the one of the D+G estimator. There are two consequences of this fact, on one hand, the procedure of {\em tuning} the estimator to achieve a satisfactory transient performance, which is difficult for the D+G scheme, is straightforward for the LS+D one.\\

\noindent {\bf F4}   A time-varying {\em forgetting factor} that allows the estimator to preserve its alertness to time-varying parameters is incorporated.\\  

\noindent {\bf F5}  Besides the case of LPRE we consider  (separable and monotonic) {\em NLPRE}, with the associated estimator preserving all the properties of the case of LPRE. Also, we show that the proposed estimator is applicable to NLPRE with {\em switched time-varying} parameters. \\

\noindent {\bf F6} We show that the new estimator is {\em robust} with respect to additive disturbances, by proving that it defines a {\em bounded-input-bounded-state} (BIBS) stable system.\\

\noindent {\bf F7} The behaviour of many physical systems is described via CT models. On the other hand, DT implementations of estimators are of significant {practical} relevance. Therefore, similarly to  \cite{KORetal,ORTetaltac,WANetal}, to comply with both scenarios we consider in the paper both kinds of LPREs. Interestingly, in contrast to  \cite{KORetal}, the construction and analysis tools of both cases are essentially the same---however, for the sake of clarity, they are presented in separate sections.\\ 

The remainder of the paper is organized as follows. In Section \ref{sec2} we present the main result of the paper for CT systems, while the DT version is given in Section \ref{sec3}. For the sake of brevity we give both results for the general case of NLPRE, presenting the LPRE case as a corollary. Section \ref{sec4} is devoted to the derivation of the proposed extended NLPRE applying directly the DREM construction procedure. Section \ref{sec5} is devoted to the proof of {\em robustness} of the new estimator. Simulation results of some examples reported in the literature are given in Section \ref{sec6} to illustrate the superior {performance} of the proposed LS+D estimator.  The paper is wrapped-up with concluding remarks in  Section \ref{sec7}.\\

\noindent {\bf Notation.} $I_n$ is the $n \times n$ identity matrix and ${\bf 0}_{s \times r}$ is an $s \times r$ matrix of zeros. $\rea_{>0}$, $\rea_{\geq 0}$, $\intnum_{>0}$ and $\intnum_{\geq 0}$ denote the positive and non-negative real and integer numbers, respectively. For $q \in \intnum_{>0}$ we defined the set $\bar q:=\{1,2,\dots,q\}$. For $a \in \rea^n$, we denote $|a|^2:=a^\top a$, and for any matrix $A$ its induced norm is $\|A\|$. $\vect:\rea^{p \times p} \to \rea^{p^2}$ is an operator that piles up the columns of a matrix.  CT signals $x:\rea_{\geq 0} \to \rea^n$ are denoted $x(t)$, while for DT sequences $x:\intnum_{\geq 0} \to \rea^n$ we use $x_k:=x(kT_s)$, with $T_s \in \rea_{> 0}$ the sampling time. The action of an operator $\mathcal H$ on a CT signal $s(t)$ is denoted as $\mathcal H[s](t)$, and $\mathcal H[s](k)$ for a sequence $s_k$. 
%

\section{Main Result for Continuous-time Systems}
\lab{sec2}
In this section we present the proposed LS+D interlaced estimator for CT systems, with the first estimator being the  LS with bounded-gain forgetting factor proposed in \cite[Subsection 8.7.6]{SLOLIbook}. First, we consider the case of NLPRE and then specialize to LPRE that, as expected, ensures stronger convergence properties.

\subsection{Nonlinearly parameterized regression equations}
\lab{subsec21}
Consider the following CT NLPRE 
	\begequ
	\lab{nlprect}
	y(t)= \phi^\top(t)   \calg(\theta)
	\endequ
where  $y(t)  \in \rea$,  $\phi(t) \in \rea^{p}$ and $\calg:\rea^q \to \rea^p,\;q \leq p$, a smooth mapping verifying the following.\\

\noindent {\bf Assumption A1.}  [Monotonicity] There exists a matrix  $Q\in\mathbb{R}^{q\times p}$ such that mapping $\calg(\theta)$ verifies the linear matrix inequality
\begali{
	\lab{ass1}
	Q \nabla\calg(\theta) + \nabla^\top  \calg(\theta)Q^\top  \geq \rho I_q >  0,\;\forall \; \theta \in \rea^q,
}
for some $\rho \in \rea_{>0}$. Consequently  \cite{DEM,PAVetal}, The mapping $Q\calg(\theta)$ is {\em strongly monotone}, that is, 
\begali{
\lab{monpro}
(a-b)^\top  &\left[Q \calg(a) -Q \calg(b)\right] \geq \rho|a-b|^2 >  0,\;\forall \; a,b \in \rea^q,
}
with $a \neq b$.

\noindent {\bf Assumption A2.} [Interval Excitation]	The regressor   $\phi(t)$  is interval exciting (IE) \cite[Definition 3.1]{TAObook}. That is, there exists constants $C_c>0$ and $t_c>0$ such that\footnote{In \cite[Definition 3.1]{TAObook} there is an initial time in the integral that, for simplicity and without loss of generality, is taking here as zero.} 
	\begequ
	\lab{ass2}
	  \int_{0}^{t_c} \phi(s) \phi^\top(s)  ds \ge C_c I_p. 
	\endequ

\begin{proposition}\em
		\lab{pro1}
		Consider the NLPRE (\ref{nlprect}) with  $\calg(\theta)$ satisfying {\bf Assumption A1}  and $\phi(t)$ verifying {\bf Assumption A2}. Define the LS+D interlaced estimator with time-varying forgetting factor
\begsubequ
		\lab{intestt1}
		\begali{
			\lab{thegt1}
			\dot{\hat \eta}(t)   & =\alpha F (t)  \phi(t)   [y(t)  -\phi^\top (t)   \hat\eta(t) ],\; \hat\eta(0)=:\eta _{0} \in \rea^p\\
			\lab{dotf}
		\dot {F}(t)  & = -\alpha F (t) \phi(t) \phi^\top (t)   F(t) {+ \beta(t) F(t) },\;F(0)={1 \over f_0} I_p\\
			\lab{thet1c}
\dot{\hat \theta}(t)   & =\gamma Q \Delta(t)  [\caly(t)  -\Delta(t)  \calg(\hat\theta   ) ],\; \hat\theta (0)=:\theta _0 \in \rea^q	\\	\lab{dotzet}
		\dot z(t)&=-\beta(t)z(t),\;z(0)=1,
}
		\endsubequ
where we defined
		\begsubequ
		\lab{aydelt}
		\begali{
		\lab{bet}
		\beta(t)&:=\beta_0\Big(1-{\|F(t)\| \over M}\Big)\\
			\lab{delt1}
			\Delta(t)  & :=\det\{I_p- z(t)  f_0F(t)\}\\
			\lab{yt1}
			\caly(t)  & := \adj\{I_p- z(t)  f_0F(t)\}[\hat\eta(t)   - z(t)  f_0 F(t) \eta _{0}],
		}
		\endsubequ	
with tuning gains $\alpha>0$, $f_0>0,\; {\beta_0 > 0},\;M \geq {1 \over f_0}$ and $\gamma >0$. Define the parameter estimation error $\tilde \theta(t):=\hat \theta(t)- \theta$. Then, for all $f_0>0$,  $\eta _{0} \in \rea^p$ and $\theta _{0} \in \rea^q$, we have that
		\begequ
		\lab{parcon}
		\lim_{t \to \infty}\tilthe(t)  =0,\;(exp),
		\endequ
	with all signals {\it bounded}.
		\end{proposition}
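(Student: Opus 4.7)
The plan is to split the analysis into two loosely coupled stages that mirror the ``LS'' and ``D'' blocks of the interlaced estimator. Stage one shows that the forgetting‐factor LS equations $(3\text{a})$--$(3\text{b})$, together with the auxiliary state $z(t)$ in $(3\text{d})$, automatically produce a valid scalar regression of the form $\caly(t)=\Delta(t)\calg(\theta)$. Stage two shows that, because $\calg$ is strongly monotone in the sense of \eqref{monpro}, the adaptation $(3\text{c})$ drives $\tilde\theta$ to zero exponentially as soon as $\Delta(t)$ is persistently exciting. The real work is to verify this latter excitation from {\bf Assumption A2} via the bounded-gain forgetting mechanism.

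First I would set $\tilde\eta(t):=\hat\eta(t)-\calg(\theta)$ and observe that, by \eqref{nlprect} and \eqref{thegt1}, $\dot{\tilde\eta}=-\alpha F\phi\phi^\top\tilde\eta$. Using the differential identity $\tfrac{d}{dt}F^{-1}=-F^{-1}\dot F F^{-1}$ together with \eqref{dotf}, one obtains
\begin{equation*}
\tfrac{d}{dt}F^{-1}(t)=\alpha\phi(t)\phi^\top(t)-\beta(t)F^{-1}(t).
\end{equation*}
A direct computation then gives $\tfrac{d}{dt}\bigl(F^{-1}\tilde\eta\bigr)=-\beta(t)F^{-1}\tilde\eta$, so, since $F^{-1}(0)=f_0 I_p$ and $z(t)$ in \eqref{dotzet} is precisely the scalar solution of the same forgetting ODE,
\begin{equation*}
F^{-1}(t)\tilde\eta(t)=z(t)f_0\,\tilde\eta(0),\qquad \tilde\eta(t)=z(t)f_0 F(t)\bigl(\eta_0-\calg(\theta)\bigr).
\end{equation*}
Rearranging this equality yields $\bigl(I_p-z(t)f_0F(t)\bigr)\calg(\theta)=\hat\eta(t)-z(t)f_0F(t)\eta_0$, and pre‑multiplying by $\adj\{I_p-z(t)f_0F(t)\}$ recovers the scalar NLPRE $\caly(t)=\Delta(t)\calg(\theta)$ with $\caly,\Delta$ as in \eqref{yt1}, \eqref{delt1}. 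Substituting this into the $\hat\theta$ dynamics gives the error equation $\dot{\tilde\theta}=-\gamma\Delta^2(t)\,Q\bigl[\calg(\hat\theta)-\calg(\theta)\bigr]$, and with $V(\tilde\theta):=\tfrac12|\tilde\theta|^2$ the monotonicity property \eqref{monpro} immediately gives $\dot V\le -\gamma\rho\,\Delta^2(t)|\tilde\theta|^2$.

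The hard part is turning this Lyapunov inequality into \emph{exponential} convergence, which requires showing that $\Delta(t)$ is PE on $[t_c,\infty)$. Here I would exploit the bounded-gain forgetting design of \cite{SLOLIbook}: the feedback $\beta(t)=\beta_0(1-\|F(t)\|/M)$ is arranged so that $\|F(t)\|\le M$ holds for all $t\ge 0$. Under {\bf Assumption A2}, the integral term in the variation-of-constants formula for $F^{-1}$ contributes $\alpha\int_0^{t_c}e^{-\int_s^{t}\beta(r)dr}\phi(s)\phi^\top(s)\,ds\ge \alpha C_c e^{-\beta_0 t_c} I_p$ on some interval after $t_c$, so $F^{-1}(t)$ becomes uniformly positive definite and thus $\|F(t)\|$ drops strictly below $M$ for all $t\ge T_c$ with some $T_c>t_c$. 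Consequently $\beta(t)\ge \beta_{\min}>0$ on $[T_c,\infty)$, which forces $z(t)\le e^{-\beta_{\min}(t-T_c)}$. Because $\|z(t)f_0F(t)\|\le z(t)f_0 M\to 0$, the spectrum of $I_p-z(t)f_0F(t)$ converges to $1$ and $\Delta(t)\to 1$, so $\Delta(t)\ge 1/2$ for all sufficiently large $t$ and $\Delta$ is PE.

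Finally I would collect the pieces: the PE of $\Delta$ combined with $\dot V\le -\gamma\rho\Delta^2|\tilde\theta|^2$ yields the exponential convergence \eqref{parcon} by a standard comparison argument. Boundedness of all signals follows since $\|F\|\le M$ by construction, $\tilde\eta(t)=z(t)f_0F(t)(\eta_0-\calg(\theta))$ is exponentially decaying (hence $\hat\eta\in\callinf$), $\Delta,\caly$ are bounded in terms of $F$ and $\hat\eta$, and $\tilde\theta$ is monotone non-increasing in norm thanks to $\dot V\le 0$. The main obstacle is therefore the estimate leading to $\beta(t)\ge\beta_{\min}$ after IE; once this is secured, the remainder of the argument is a textbook Lyapunov calculation.
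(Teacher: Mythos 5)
Your first stage coincides with the paper's proof essentially line by line: the identity ${d \over dt}\big(F^{-1}\tilde\eta\big)=-\beta(t)F^{-1}\tilde\eta$, the resulting extended regression $\big(I_p-z(t)f_0F(t)\big)\calg(\theta)=\hat\eta(t)-z(t)f_0F(t)\eta_0$, the mixing step, and the Lyapunov inequality $\dot V\le -2\rho\gamma\Delta^2(t)V$ are exactly the steps taken there. The genuine gap is in your excitation analysis. You claim that after the IE window $\|F(t)\|$ stays strictly below $M$, hence $\beta(t)\ge\beta_{\min}>0$ on $[T_c,\infty)$, hence $z(t)\to 0$ exponentially and $\Delta(t)\to 1$. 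That chain breaks under IE alone. Consider the admissible scenario $\phi(t)\equiv 0$ for $t>t_c$. Then ${d \over dt}F^{-1}=-\beta(t)F^{-1}$ for $t>t_c$, so $F(t)={z(t_c)\over z(t)}F(t_c)$: as long as $\beta(t)>0$ the gain grows, $\|F(t)\|$ creeps back up toward $M$, $\beta(t)\to 0$, and $z(t)$ decreases only to the strictly positive limit $z(t_c)\|F(t_c)\|/M$. In this scenario $z(t)f_0F(t)=z(t_c)f_0F(t_c)$ is \emph{constant} for $t\ge t_c$, so $\Delta(t)$ equals $\det\{I_p-z(t_c)f_0F(t_c)\}$, which is in general not $1$. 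Your conclusion (that $\Delta$ is bounded away from zero, hence PE) happens to survive, but not for the reason you give: the bounded-gain forgetting mechanism is designed precisely so that $\beta(t)$ vanishes when excitation ceases, which is incompatible with a uniform lower bound $\beta_{\min}>0$.

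The paper closes this step differently and you should adopt something of that form: it writes $I_p-z(t)f_0F(t)=\alpha F(t)z(t)\int_0^t z(\tau)^{-1}\phi(\tau)\phi^\top(\tau)\,d\tau$ and lower-bounds the right-hand side directly by $\alpha\rho\int_0^t\phi(\tau)\phi^\top(\tau)\,d\tau\ge \alpha\rho\,C_c I_p$ for $t\ge t_c$, using $z(\tau)\le 1$ together with a lower bound on $z(t)F(t)$ (the degenerate case $z(t)F(t)\to 0$, where indeed $\Delta\to1$, is treated separately). In other words, the nonsingularity of $I_p-z(t)f_0F(t)$ on $[t_c,\infty)$ must be established by an explicit positive-definiteness estimate of the accumulated information matrix, not by arguing that the forgetting drives $z(t)f_0F(t)$ to zero. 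The rest of your argument (the Lyapunov comparison once $\Delta$ is PE, and the boundedness claims) is fine.
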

 \begin{proof}
With some abuse of notation, define the signal
$$
\tilde \calg(t):=  \hat\eta(t)   - \calg(\theta),
$$
whose derivative is given by
\begali{
\lab{dottilcal}
\dot{\tilde \calg}(t)  &=-\alpha F(t)\phi (t) \phi^\top(t) \tilde\calg(t),
}
where we replaced \eqref{nlprect} in \eqref{thegt1}. Now, from
$$
{d \over dt}[F^{-1} (t)] = -  F^{-1}(t)\dot F(t) F^{-1}(t),
$$
we have that
\begequ
\lab{dotfinv}
{d \over dt}[F^{-1} (t)] =- \beta(t) F^{-1}(t)+\alpha  \phi(t) \phi^\top(t) , 
\endequ
and consequently  
\begalis{
{d \over dt}[F^{-1}(t) \tilde \calg(t) ]&  = F^{-1}(t) \dot{\tilde \calg}(t) + {d \over dt}[F^{-1}(t) ]\tilde \calg(t) \\
&= -\alpha  \phi(t)   \phi^\top(t)  \tilde \calg(t)  +  \alpha  \phi(t) \phi^\top (t)\tilde \calg(t)  \\
&- \beta(t) F^{-1} \tilde \calg(t)  \\
&= - \beta(t) F^{-1}(t) \tilde \calg(t). 
}
This implies that
\begequ
\lab{prols}
F^{-1}(t) \tilde \calg(t) =  z(t)f_0  \tilde\calg(0),\;\forall t \geq 0,
\endequ
where we used the definition of the function $z(t)$ and the fact that $F^{-1}(0)=f_0 I_p$. The latter may be rewritten as\footnote{This key identity is given in \cite[Equation (8.108)]{SLOLIbook} for the case of LPRE.}  
$$
\hat \eta(t) - \calg(\theta) = z(t)f_0F(t)  [\eta_{0} - \calg(\theta)],
$$
from which we define the {\em extended NLPRE}
\begsubequ
		\lab{keyide1}
		\begali{
		\lab{extnlpre}
		Y(t) &=\Phi(t)\calg(\theta)\\
		\lab{ylsd}
		Y(t) &:=\hat \eta(t) - z(t)f_0F (t) \eta_{0}\\
\lab{philsd}
\Phi(t) &:=I_p-z(t)f_0F(t).
}
\endsubequ
Following the DREM procedure we multiply \eqref{extnlpre} by \,  $\adj\{\Phi(t)\}$ 
to get the following NLPRE with scalar regressor
\begequ
\lab{ydelc}
\caly(t) = \Delta(t) \calg(\theta),
\endequ
 where we used \eqref{delt1} and \eqref{yt1}. Replacing \eqref{ydelc} in \eqref{thet1c} we get
\begalis{
\dot{\hat \theta}(t)  & =-{ \gamma \Delta^2(t) }Q[ \calg(\hat\theta(t) ) - \calg(\theta)] .
}
To analyse the stability of the latter system define the Lyapunov function candidate 
\begequ
\lab{lyafunv}
V(\tilde \theta(t)) := \frac{1}{2\gamma} |\tilde \theta(t)|^2.
\endequ
Computing its time derivative yields
\begalis{
\dot V(t)   & =  - \Delta^2(t) [ \hat \theta(t)- \theta]^\top Q {[ \calg(\hat \theta(t) ) - \calg(\theta)]} \\
& \leq  -  \Delta^2 (t) \rho | \tilde \theta(t)|^2  \\
& =  - 2\rho \gamma{\Delta}^2(t)  V(t) ,
}
where we invoked the {\bf Assumption A2} of strong monotonicity \eqref{monpro} of $Q \calg(\theta)$ to get the first bound.

To complete the proof we first notice that in \cite{SLOLIbook} it is shown that $F(t) \leq M I_p$ and that $\beta(t) \geq 0$---the latter implies that $z(t)$ is non-increasing and upper bounded by one. We consider two cases: when $z(t)F(t) \to 0$ and when $z(t)F(t) \geq \rho I_q>0$. In the first case we notice that  and if $z(t)F(t) \to 0$ then $\Delta(t) \to 1$ and, consequently, $\Delta(t)$ is PE. For the second case,  we solve \eqref{dotfinv} to get
$$
F^{-1}(t)= z(t)f_0 I_p +\alpha\int_0^t e^{-\int_\tau^t \beta(s)ds} \phi(\tau)\phi^\top(\tau)d\tau,\;\forall t \geq 0,
$$
which may be rewritten as
\begalis{
I_p-z(t)f_0F(t) & = \alpha F(t)\int_0^t e^{-\int_\tau^t \beta(s)ds} \phi(\tau)\phi^\top(\tau)d\tau\\
&=  \alpha F(t) z(t) \int_0^t {1 \over z(\tau)} \phi(\tau)\phi^\top(\tau)d\tau\\
& \geq   \alpha F(t) z(t) \int_0^t \phi(\tau)\phi^\top(\tau)d\tau\\
& \geq   \alpha \rho \int_0^t \phi(\tau)\phi^\top(\tau)d\tau,\;\forall t \geq 0.
}
Now, from the implication 
$$
\phi (t)   \in IE\;\Rightarrow\;  \int_0^{t}\phi(\tau)\phi^\top(\tau)d\tau >0,\;\forall t \geq t_c,
$$
we conclude that the matrix $I_p-z(t)f_0F(t)$ is nonsingular for all $ t \geq t_c$, which implies that $\Delta(t)  $ is PE. This concludes the proof.
\end{proof}

\begrem
\lab{rem1}
Notice that, as indicated above, if $z(t) \to 0$ then $\Delta(t) \to 1$ and,   in view of the extended NLPRE \eqref{extnlpre}, we have that $\caly(t)  \to \calg(\theta)$.  Consequently the parameter update law  \eqref{thet1c} will verify
$$
\dot{\hat \theta}(t)  \to \gamma Q    [\calg(\theta) -   \calg(\hat\theta(t) ) ],
$$
and the Lyapunov stability analysis still holds.
\endrem
\subsection{Linearly parameterized regression equations}
\lab{subsec22}
In the next corollary we specialize the result of Proposition \ref{pro1} for the case of LPRE. To streamline the presentation of the result we recall the following.

\begin{definition}\em \cite{GOOSINbook}
\lab{def1}
The LPRE 
\begequ
	\lab{lrect}
	y(t) = \phi^\top(t)  \theta
	\endequ
where  $y(t)  \in \rea$,  $\phi(t) \in \rea^{q}$ and $\theta \in \rea^q$ is said to be {\em identifiable} if and only if there exists a set of time instants---$\{t_i\}_{i\in \bar q},\;t_i \in \reapos$, such that
$$
\rank\Big\{\begmat{\phi(t_1)|\phi(t_2)|&\cdots&|\phi(t_q)}\Big\}=q.
$$
\end{definition}

For the sake of completeness we also recall the following result of \cite{WANetal}

\begin{lemma}\em
\lab{lem1}
The LPRE \eqref{lrect} is identifiable {\em if and only if} the regressor vector $\phi$ is IE.
\end{lemma}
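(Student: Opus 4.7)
The plan is to prove both implications directly from the definitions, using continuity of the regressor $\phi$ (a standard hypothesis in this literature) where needed. The forward direction (IE $\Rightarrow$ identifiable) is essentially a spanning argument, while the reverse direction (identifiable $\Rightarrow$ IE) requires a local-to-integral argument that leverages continuity to turn pointwise rank information into a positive-definite integral bound.

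For the direction IE $\Rightarrow$ identifiable, I would assume by contradiction that no selection of $q$ time instants yields a full-rank matrix, i.e.\ $\mathrm{span}\{\phi(s): s \in [0,t_c]\}$ has dimension strictly less than $q$. Then there exists a unit vector $v \in \rea^q$ orthogonal to $\phi(s)$ for every $s$ in $[0,t_c]$, which forces
\[
v^\top \!\left(\int_{0}^{t_c}\phi(s)\phi^\top(s)\,ds\right)\! v \;=\; \int_{0}^{t_c}(v^\top \phi(s))^2\,ds \;=\; 0,
\]
directly contradicting the IE lower bound $\int_{0}^{t_c}\phi(s)\phi^\top(s)ds \ge C_c I_q$. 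Hence the regressor values span $\rea^q$, and extracting any basis from $\{\phi(s)\}$ gives the set $\{t_1,\dots,t_q\}$ required by Definition~\ref{def1}.

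For the direction identifiable $\Rightarrow$ IE, pick times $t_1,\dots,t_q$ with $M:=\sum_{i=1}^{q}\phi(t_i)\phi^\top(t_i)$ positive definite, say $M \ge \lambda I_q$ with $\lambda>0$. By continuity of $\phi$, choose pairwise disjoint closed intervals $I_i \subset \reanonneg$ of equal length $2\delta$ centred at $t_i$, with $\delta$ small enough that
\[
\|\phi(s)\phi^\top(s) - \phi(t_i)\phi^\top(t_i)\| \;\le\; \tfrac{\lambda}{4q},\qquad \forall s \in I_i.
\]
Summing the integrals over these intervals gives
\[
\int_{\bigcup_i I_i}\!\!\phi(s)\phi^\top(s)\,ds \;\ge\; 2\delta\, M \;-\; 2\delta\,q\cdot\tfrac{\lambda}{4q}\,I_q \;\ge\; \tfrac{3\delta\lambda}{2}\,I_q,
\]
so choosing $t_c$ larger than the right endpoint of every $I_i$ and $C_c:=3\delta\lambda/2$ yields the IE bound \eqref{ass2}. (Extending the integration domain from $\bigcup_i I_i$ to $[0,t_c]$ only increases the positive semidefinite integrand.)

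The only real obstacle is the regularity issue in the second direction: the argument just sketched uses continuity of $\phi$ in an essential way to pass from the pointwise rank condition at the $t_i$ to an integral positivity bound over a neighbourhood. Under merely measurable $\phi$, evaluating $\phi$ at an isolated point is not well-defined, and one would instead need to interpret Definition~\ref{def1} in a Lebesgue-density sense and rely on Lebesgue differentiation to produce the neighbourhoods $I_i$; this is a technical refinement rather than a new idea, and I would handle it (if required) by invoking density points of the measurable map $s \mapsto \phi(s)\phi^\top(s)$ and repeating the estimate above on those density neighbourhoods.
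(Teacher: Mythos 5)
Your proof is correct. Note first that the paper itself does not prove Lemma~\ref{lem1}: it is explicitly ``recalled'' from the reference [WANetal] with no argument given, so there is no in-paper proof to compare against. Your two-sided argument is the standard one and is sound. The direction IE $\Rightarrow$ identifiable is exactly the right spanning/contradiction argument: a vector $v$ annihilating $\phi(s)$ on $[0,t_c]$ kills the quadratic form $v^\top\bigl(\int_0^{t_c}\phi\phi^\top\bigr)v$, contradicting \eqref{ass2}, and a spanning family of values contains a basis, which is precisely Definition~\ref{def1}. The converse is also handled correctly: continuity lets you fatten the isolated instants $t_1,\dots,t_q$ into disjoint intervals on which $\phi\phi^\top$ stays within $\lambda/(4q)$ of its value at the centre, and the resulting bound $\int_{\cup_i I_i}\phi\phi^\top\,ds\geq \tfrac{3\delta\lambda}{2}I_q$ extends to $[0,t_c]$ by positive semidefiniteness of the integrand. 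You are also right to flag that continuity (or at least a Lebesgue-point interpretation of Definition~\ref{def1}) is genuinely needed in this direction --- for a merely measurable regressor, pointwise evaluation at the $t_i$ is not meaningful and the fattening step is where the regularity is consumed; this hypothesis is implicit throughout the paper (the regressor is generated by filters, hence continuous), so your proof closes the lemma as stated. Two cosmetic points: take $\delta<\min_i t_i$ so the intervals stay in $\reanonneg$, and note that the $t_i$ in Definition~\ref{def1} are automatically distinct since repeated columns would drop the rank below $q$.
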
 

We are in position to present the main result of the subsection whose proof follows immediately from Lemma \ref{lem1}, the proof of Proposition \ref{pro1} and \cite[Proposition 2]{ORTetaltac}.

\begin{corollary}\em
\lab{cor1}
		Consider the LPRE \eqref{lrect} and assume it is {\em identifiable}. Define the LS+D interlaced estimator  with time-varying forgetting factor
		\begalis{
			\dot{\hat \eta}(t)   & =\alpha F (t)  \phi(t)   [y(t)  -\phi^\top (t)   \hat\eta(t) ],\; \hat\eta(0)=:\eta _{0} \in \rea^p\\
		\dot {F}(t)  & = -\alpha F (t) \phi(t) \phi^\top (t)   F(t) {+ \beta(t) F(t) },\;F(0)={1 \over f_0} I_p\\
\dot{\hat \theta}(t)   & =\gamma \Delta(t)  [\caly(t)  -\Delta(t)  \hat\theta    ],\; \hat\theta (0)=:\theta _0 \in \rea^q	\\	
		\dot z(t)&=-\beta(t)z(t),\;z(0)=1,
}
with   tuning gains $\alpha>0,\;f_0>0,\; {\beta \geq 0},\;M \geq {1 \over f_0}$ and $\gamma >0$,  and we used the definitions \eqref{aydelt}. Then, for all $f_0>0$,  $\eta _{0} \in \rea^q$ and $\theta _{0} \in \rea^q$, we have that \eqref{parcon} holds with all signals {\it bounded}. Moreover, the {\em individual} parameter errors verify the {\em monotonicity} condition
$$
|\tilthe_i(t_a)| \leq |\tilthe_i(t_b)|,\;\forall\, t_a \geq t_b \geq 0,\;i \in \bar q.
$$
\end{corollary}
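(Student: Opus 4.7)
The plan is to view Corollary \ref{cor1} as a direct specialization of Proposition \ref{pro1} to the linear setting, together with one observation specific to the LPRE case that yields the element-wise monotonicity. First I would invoke Lemma \ref{lem1}: identifiability of \eqref{lrect} is equivalent to the IE condition \eqref{ass2} on $\phi(t)$, so \textbf{Assumption A2} is available. To put the LPRE \eqref{lrect} in the NLPRE form \eqref{nlprect}, take $p=q$ and $\calg(\theta)=\theta$, so that $\nabla\calg(\theta)=I_q$. Choosing $Q=I_q$ trivially verifies \eqref{ass1} with $\rho=2$, hence \textbf{Assumption A1} holds. Moreover, $\calg(\hat\theta)=\hat\theta$ makes the $\hat\theta$-update in Proposition \ref{pro1} collapse to the LPRE update law displayed in the corollary.

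With these identifications, the proof of Proposition \ref{pro1} applies verbatim. In particular, the identity \eqref{prols} specializes to the classical LS identity from \cite[Equation (8.108)]{SLOLIbook}, the scalar-regressor LPRE after the mixing step becomes $\caly(t)=\Delta(t)\,\theta$, and the argument splitting into the two cases ``$z(t)F(t)\to 0$'' and ``$z(t)F(t)$ bounded away from zero'' shows that $\Delta(t)$ is PE. Exponential convergence \eqref{parcon} and boundedness of all signals then follow from the Lyapunov analysis with $V(\tilde\theta)=\tfrac{1}{2\gamma}|\tilde\theta|^2$, exactly as in Proposition \ref{pro1}.

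The only genuinely new claim is the element-wise monotonicity $|\tilthe_i(t_a)|\le|\tilthe_i(t_b)|$ for $t_a\ge t_b\ge 0$. This is where the DREM mixing step pays off: subtracting $\Delta(t)\theta$ from both sides of the $\hat\theta$-update and using $\caly(t)=\Delta(t)\theta$ yields the \emph{decoupled scalar} error dynamics
\begin{equation*}
\dot{\tilde\theta}_i(t) = -\gamma\,\Delta^2(t)\,\tilde\theta_i(t),\quad i\in\bar q,
\end{equation*}
whose explicit solution
\begin{equation*}
\tilde\theta_i(t) = \exp\!\Big(-\gamma\!\int_0^t \Delta^2(s)\,ds\Big)\tilde\theta_i(0)
\end{equation*}
is non-increasing in absolute value for every coordinate separately. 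This is precisely the monotonicity property already established for DREM-based schemes in \cite[Proposition 2]{ORTetaltac}, which the statement invokes.

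The main (and really the only) delicate point will be to make sure that the construction of the extended regressor $\Phi(t)=I_q-z(t)f_0 F(t)$ and the scalar $\Delta(t)=\det\{\Phi(t)\}$ go through unchanged in the LPRE case; this follows because nowhere in the derivation of \eqref{keyide1}–\eqref{ydelc} in the proof of Proposition \ref{pro1} is the nonlinearity of $\calg$ used—only the LS identity \eqref{prols} and the adjugate-based mixing. Everything else is routine bookkeeping, and I would simply cite the proof of Proposition \ref{pro1} rather than re-derive it.
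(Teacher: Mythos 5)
Your proposal is correct and follows essentially the same route as the paper, which states only that the corollary "follows immediately from Lemma \ref{lem1}, the proof of Proposition \ref{pro1} and \cite[Proposition 2]{ORTetaltac}"; your choice $\calg(\theta)=\theta$, $Q=I_q$ (so \eqref{ass1} holds with $\rho=2$) and your reduction of identifiability to IE via Lemma \ref{lem1} are exactly the intended specialization. The explicit decoupled scalar error dynamics $\dot{\tilde\theta}_i=-\gamma\Delta^2(t)\tilde\theta_i$ and its closed-form solution correctly supply the element-wise monotonicity that the paper delegates to \cite[Proposition 2]{ORTetaltac}.
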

%
\section{Main Result for Discrete-time Systems}
\lab{sec3}
In this section we present the proposed LS+D estimator for DT systems. Similarly to the previous section, we consider first the case of NLPRE and then specialize to LPRE. As will be shown below, in the DT case we need an additional Lipschitz assumption on the mapping $\calg(\theta)$, which is conspicuous by its absence in CT.

\subsection{Nonlinearly parameterized regression equations}
\lab{subsec31}
Consider the following DT NLPRE 
	\begequ
	\lab{nlprek}
	y_k= \phi_k^\top   \calg(\theta)
	\endequ
where  $y_k  \in \rea$,  $\phi_k \in \rea^{p}$ and $\calg:\rea^q \to \rea^p,\;q \leq p$. Assume $\calg(\theta)$  verifies {\bf Assumption A1} and the following.\\

\noindent {\bf Assumption A3.} [Lipschitz] The mapping $\calg(\theta)$ satisfies the {\em Lipschitz condition}
\begequ
\lab{lipcon}
|\calg(a) - \calg(b)| \leq \nu |a-b|,\; \forall a,b \in \rea^q,
\endequ
for some $\nu>0$.

Moreover, assume the regressor $\phi_k$ satisfies. 

\noindent {\bf Assumption A4.}  [Interval Excitation] 	\cite[Definition 3.3]{TAObook}	The regressor $\phi_k$ is IE. That is, there exists constants  $C_d>0$ and $k_c> 0$ such that
		\begequ
	\lab{ass4}
	\begarr{ccl} \Sigma_{j=0}^{k_c} \phi_{j} \phi^\top_{j} \ge C_d I_p.
 \endarr 
\endequ	

\begin{proposition}\em
		\lab{pro2}
		Consider the NLPRE (\ref{nlprek}) with  $\calg(\theta)$ satisfying  {\bf Assumption A1} and  {\bf  A3} and $\phi_k$ verifying {\bf Assumption A4}. Define the normalized LS+DREM interlaced estimator
		\begsubequ
		\lab{intestk}
		\begali{
			\lab{thegk}
			{\hat \eta}_{k+1} & =\hat \eta_k+{1 \over  \beta +\phi^\top_k  F_{k-1}\phi_k} F_{k-1} \phi_k  (y_k -\phi^\top_k \hat\eta_k),\\
				\lab{phit}
		{F}_k & =
{1 \over \beta}\left(I_p -{1 \over  \beta +\phi^\top_k  F_{k-1}\phi_k} F_{k-1} \phi_k \phi^\top_k \right) F_{k-1} \\
			\lab{thek}
			{\hat \theta}_{k+1} & ={\hat \theta}_{k}+\gamma Q {\Delta_k \over 1 + \Delta^2_k}[\caly_k -\Delta_k \calg(\hat\theta_k) ],\; \hat\theta_0=:\theta_0 \in \rea^q\\
			\lab{zk}
			z_k& = \beta z_{k-1},\;z_{-1}=1,
		}
		\endsubequ
with initial conditions $\hat\eta_0=:\eta_{0} \in \rea^p$, $F_{-1}={1 \over f_0} I_p$ and the definitions
		\begsubequ
		\lab{aydelk}
		\begali{
			\lab{delk}
			\Delta_k & :=\det\{I_p- f_0 z_{k} F_{k-1}\}\\
			\lab{yk}
			\caly_k & := \adj\{I_p- f_0 z_{k} F_{k-1}\} (\hat\eta_k -  f_0 z_{k} F_{k-1} \eta_{0}),
		}
		\endsubequ
		with tuning parameters the initial condition $f_0>0$, the forgetting factor $\beta \in (0,1]$ and  the adaptation gain $\gamma >0$, which is selected such that
\begin{equation}
\lab{sig}
\sigma:=\rho- {\gamma \nu^2 \over 2} \lambda_{\max}\{Q^\top Q\}>0.
\end{equation}
Define the parameter estimation error $\tilde \theta_k:=\hat \theta_k - \theta$. Then, for all $f_0>0$, $\eta_{0} \in \rea^p$ and $\theta_{0} \in \rea^q$, we have that
		\begequ
		\lab{parconk}
		\lim_{k \to \infty}|\tilthe_k|  =0,\;(exp),
		\endequ
		with all signals \textit{bounded}.
	\end{proposition}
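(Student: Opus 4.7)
\textbf{Plan for the proof of Proposition \ref{pro2}.} The overall strategy is to replay the argument of Proposition~\ref{pro1} in discrete time, substituting DT versions of the LS identities and adding a Lipschitz-based estimate to control the quadratic term produced by the DT Lyapunov difference. The analytical skeleton is: (i) derive the DT analogue of the identity \eqref{prols}; (ii) apply the DREM mixing step to extract a scalar regression; (iii) perform a Lyapunov analysis of the parameter error; (iv) verify that $\Delta_k$ is eventually bounded below by a positive constant.

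First I would introduce $\tilde\calg_k := \hat\eta_k - \calg(\theta)$, substitute \eqref{nlprek} into \eqref{thegk}, and use \eqref{phit} to get
\[
\tilde\calg_{k+1} = \left(I_p - \frac{F_{k-1}\phi_k\phi_k^\top}{\beta+\phi_k^\top F_{k-1}\phi_k}\right)\tilde\calg_k = \beta\,F_k F_{k-1}^{-1}\tilde\calg_k.
\]
The matrix inversion lemma applied to \eqref{phit} yields $F_k^{-1} = \beta F_{k-1}^{-1} + \phi_k\phi_k^\top$, so multiplying by $F_k^{-1}$ gives the telescoping identity $F_k^{-1}\tilde\calg_{k+1} = \beta F_{k-1}^{-1}\tilde\calg_k$. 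Iterating from $F_{-1}^{-1} = f_0 I_p$ and using \eqref{zk} delivers the DT counterpart of \eqref{prols}, namely $F_{k-1}^{-1}\tilde\calg_k = f_0 z_k[\eta_0 - \calg(\theta)]$, which rearranges as $\hat\eta_k - f_0 z_k F_{k-1}\eta_0 = (I_p - f_0 z_k F_{k-1})\calg(\theta)$. Premultiplying by $\adj\{I_p - f_0 z_k F_{k-1}\}$ and invoking \eqref{aydelk} yields the scalar extended NLPRE $\caly_k = \Delta_k\calg(\theta)$. Plugging this into \eqref{thek} produces the error recursion
\[
\tilde\theta_{k+1} = \tilde\theta_k - \gamma\,\frac{\Delta_k^2}{1+\Delta_k^2}\,Q\bigl[\calg(\hat\theta_k)-\calg(\theta)\bigr].
\]

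For the stability analysis I would take $V_k := \tfrac{1}{2\gamma}|\tilde\theta_k|^2$ and write $\mu_k := \gamma\Delta_k^2/(1+\Delta_k^2) \le \gamma$. Expanding $|\tilde\theta_{k+1}|^2$, applying the strong monotonicity \eqref{monpro} to the cross term, and applying the Lipschitz bound \eqref{lipcon} together with $|Q[\calg(\hat\theta_k)-\calg(\theta)]|^2 \le \lambda_{\max}\{Q^\top Q\}\,\nu^2|\tilde\theta_k|^2$ to the quadratic term, then using $\mu_k \le \gamma$ and the gain condition \eqref{sig}, I arrive at the contraction-type bound $V_{k+1} \le (1 - 2\mu_k\sigma) V_k$, with coefficient in $[0,1)$. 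Assumption~A3 is precisely the new ingredient needed to dominate the quadratic term that has no CT counterpart.

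The main obstacle is the final step: proving that $\mu_k$ is eventually bounded away from zero, i.e., that $\Delta_k$ is eventually bounded below by a positive constant. Mirroring the CT argument, I would solve the recursion for $F_k^{-1}$ to obtain an identity of the form
\[
I_p - f_0 z_k F_{k-1} = F_{k-1}\sum_{j=0}^{k-1}\beta^{k-1-j}\phi_j\phi_j^\top,
\]
and split into the two regimes of Proposition~\ref{pro1}: either $f_0 z_k F_{k-1}\to 0$, in which case $\Delta_k \to 1$; or $f_0 z_k F_{k-1}$ stays bounded below by a positive-definite matrix, in which case Assumption~A4 combined with the displayed identity gives $\Delta_k \ge \delta > 0$ for all $k \ge k_0$. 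Either way $\mu_k \ge \gamma\delta^2/(1+\delta^2) > 0$ eventually, and the Lyapunov bound produces the geometric decay claimed in \eqref{parconk}; boundedness of all signals then follows from the standard boundedness properties of LS with forgetting factor. The delicate point here is that, unlike in CT where the adaptive forgetting $\beta(t)\propto(1-\|F\|/M)$ forces $F\le M I_p$ a priori, in the DT statement $\beta \in (0,1]$ is a constant, so the dichotomy must be extracted from the summed form of $F_{k-1}^{-1}$ rather than from a closed-form bound on $F_{k-1}$ itself.
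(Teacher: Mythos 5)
Your proposal follows essentially the same route as the paper's proof: the same telescoping identity $F_k^{-1}\tilde\eta_{k+1}=\beta F_{k-1}^{-1}\tilde\eta_k$ obtained via the matrix inversion lemma, the same mixing step yielding $\caly_k=\Delta_k\calg(\theta)$, the same Lyapunov function with the Lipschitz bound absorbing the quadratic term under the gain condition \eqref{sig}, and the same summed expression for $F_{k-1}^{-1}$ to show that IE renders $I_p-f_0z_kF_{k-1}$ nonsingular for $k\ge k_c$. The only cosmetic differences are that you state the decay as a contraction $V_{k+1}\le(1-2\mu_k\sigma)V_k$ where the paper sums the inequality and invokes an $\ell_2$/limit argument, and that you import the two-regime dichotomy from the CT proof where the paper argues nonsingularity directly.
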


 \begin{proof}
 To simplify the notation we define the normalization sequence
\begequ
\lab{m}
			{m}_k := \beta +\phi^\top_k  F_{k-1}\phi_k.
\endequ
With some abuse of notation, define the error signal
\begequ
\lab{tiletak}
\tilde \eta_k  :=  \hat\eta_k   - \calg(\theta),
\endequ
whose dynamics is given by
\begali{
\lab{tileta}
\tilde \eta_{k+1}  &=\left(I_p -{1 \over m_k} F_{k-1}\phi_k \phi^\top_k\right)\tilde \eta_{k},
}
where we used  \eqref{nlprek} and \eqref{thegk}. Now, direct application of the matrix inversion lemma to \eqref{phit} shows that,
\begequ
\lab{finv}
F^{-1}_k=\beta F^{-1}_{k-1} + \phi_k \phi^\top_k.
\endequ

Combining \eqref{tileta} and \eqref{finv} we can prove the following fundamental property of LS 
$$
F^{-1}_k\tilde \eta_{k+1} =\beta F^{-1}_{k-1}\tilde \eta_{k}. 
$$
Solving this difference equation we get
\begalis{
F^{-1}_{k-1}\tilde \eta_{k}& =\beta^k F^{-1}_{-1} \tilde \eta_{0}\\
 &=z_k f_0 \tilde \eta_{0},\;\forall k \geq 0, 
}
where we replaced the solution of \eqref{zk} and the initial condition choice $F_{-1}={1 \over f_0} I_p$ to get the second identity. Using the definition \eqref{tiletak}, the equation above may be rewritten as the {\em extended LPRE}
\begali{
\lab{keyide}
(I_p- f_0 z_{k} F_{k-1})\calg(\theta) &=\hat \eta_k - f_0 z_{k} F_{k-1} \eta_{0}.
}
Following the DREM procedure we multiply \eqref{keyide} by  $\adj\{I_p$ \,\,$- f_0 z_{k} F_{k-1} \}$ to get the following NLPRE
\begequ
\lab{ydel}
\caly_k= \Delta_k\calg(\theta),
\endequ
where we used \eqref{delk} and \eqref{yk}. Replacing \eqref{ydel} in \eqref{thek} we get the dynamics of the parameter error
\begali{
\lab{errequk}
\tilde { \theta}_{k+1} & =\tilde \theta_k-{ \gamma \bar \Delta_k^2}Q[ \calg(\hat\theta_k) - \calg(\theta)],
}
where, to simplify the notation, we defined the normalized scalar regressor sequence
\begequ
\lab{bardelk}
\bar \Delta^2_k:={\Delta^2_k \over 1 + \Delta^2_k} \leq 1.
\endequ
To analyze the stability of this equation define the Lyapunov function candidate
\begequ
\lab{lyafun}
V_k = \frac{1}{2 \gamma} |\tilde \theta_k|^2,
\endequ
that satisfies
\begali{
\nonumber
V_{k+1} = & V_k- \bar \Delta_k^2 \tilde\theta_k^\top Q [ \calg(\hat\theta_k) - \calg(\theta)] \\
&+\frac{\gamma}{2} \bar \Delta_k^4  [ \calg(\hat\theta_k) - \calg(\theta)]^\top Q^\top Q [ \calg(\hat\theta_k) - \calg(\theta)] \\ \nonumber
\leq & V_k - \rho \bar  \Delta^2_k |\tilde \theta_k|^2 + {\gamma \nu^2\over 2}\lambda_{\max}\{Q^\top Q\} \bar  \Delta^4_k |\tilde \theta_k|^2 \\
\leq & V_k -   \left[\rho -{\gamma \nu^2\over 2} \lambda_{\max}\{Q^\top Q\}\right] \bar \Delta^2_k|\tilde \theta_k|^2 \\ 
\nonumber
= & V_k-{\sigma} \bar \Delta^2_k   |\tilde \theta_k|^2.
\lab{vk}
}
where we invoked {\bf Assumption A2} and {\bf Assumption A3} to get the first bound, \eqref{bardelk} for the second one and used \eqref{sig} in the last identity.  Summing the inequality above we get
\begalis{
V_k-V_0 &\leq - \sum_{j=1}^k {\sigma}\bar \Delta^2_j |\tilde \theta(t)_j|^2\;\Rightarrow
{V_0 \over \sigma} \geq  \sum_{j=1}^k \bar \Delta^2_j |\tilde \theta(t)_j|^2.
}
Taking the limit as $k \to \infty$ we conclude that $\Delta_k|\tilde \theta_k| \in \ell_2$, consequently 
\begequ
\lab{limpro}
\lim_{k \to \infty}\bar  \Delta_k |\tilde \theta_k| = 0.
\endequ
 Now, from the Algebraic Limit Theorem \cite[Theorem 3.3]{RUD} we know that the limit of the product of two convergent sequences is the product of their limits. On the other hand, from the fact that
$$
V(k+1) \leq V(k) \leq V(0),\;\forall k \in \intnum_{>0},
$$ 
we have that $|\tilde \theta(k)|$ is a bounded monotonic sequence, hence it converges \cite[Theorem 3.14]{RUD}. Finally, if $\bar \Delta(k)$ converges to a {\em non-zero} limit,  we conclude from \eqref{limpro} that  $|\tilde \theta_k| \to 0$.  

We will proceed now to prove that \eqref{ass4} of  {\bf Assumption A4} ensures this property of $\Delta_k$, which together with the fact that if $\Delta_k$ converges to a non-zero limit, then $\bar \Delta_k$ also converges to a non-zero limit. Indeed, the solution of the difference equation \eqref{finv} is given by
$$
F^{-1}_k=\beta^{k+1}f_0+\beta^k \sum_{j=0}^k  \beta^{-j} \phi_j \phi^\top_j. 
$$
Evaluating this expression for $k=k_c$ yields
$$
I_p - f_0 z_{k_c+1} F_{k_c} =\beta^{k_c}F_{k_c} \sum_{j=0}^{k_c}  \beta^{-j} \phi_j \phi^\top_j. 
$$
The IE assumption ensures that the summation term is positive definite, since $F_{k_c}$ is nonsingular this ensures that the matrix on the left hand side is nonsingular.   The proof that this property holds for any $k>k_c$ stems from the observation that, for any $k_b>0$ we have that  
$$
 \sum_{j=0}^{k_c+k_b}  \beta^{-j} \phi_j \phi^\top_j= \sum_{j=0}^{k_c}  \beta^{-j} \phi_j \phi^\top_j+ \sum_{j=k_c+1}^{k_c+k_b}  \beta^{-j} \phi_j \phi^\top_j,
$$
preserving the positivity property mentioned above. This completes the proof.
\end{proof}

\subsection{Linearly parameterized regression equations}
\lab{subsec32}
In this section,  we use the result of Proposition \ref{pro2} for the case of LPRE---obviously, in this linear case {\bf Assumption A1} and {\bf Assumption A3} are automatically satisfied. As a first step we recall  \cite{WANetal} that Definition \ref{def1} and Lemma \ref{lem1}, given for continuous functions, are also valid for sequences. 

As a second step notice that for the LPRE case \eqref{keyide} takes the form
$$
(I_p- f_0 z_{k} F_{k-1})\theta =\hat \eta_k - f_0 z_{k} F_{k-1} \eta_{0},
$$
consequently \eqref{ydel} now becomes
$$
\caly_i(k)= \Delta_k \theta_i,\; i \in \bar q,
$$
and  the dynamics of the parameter error \eqref{errequk} is now given by
$$
\tilde { \theta}_{k+1}  =\tilde \theta_k-{ \gamma \bar \Delta_k^2}\tilde\theta_k ,
$$
whose stability is follows immediately from the IE assumption   \cite[Proposition 2]{WANetal} . 

\begin{corollary}\em
\lab{cor2}
		Consider the LPRE and assume it is {\em identifiable}. Define the normalized LS+D interlaced estimator with forgetting factor
		\begalis{
			{\hat \eta}_{k+1} & =\hat \eta_k+{1 \over  \beta +\phi^\top_k  F_{k-1}\phi_k} F_{k-1} \phi_k  (y_k -\phi^\top_k \hat\eta_k), \nonumber \\
		{F}_k & =
{1 \over \beta}\left(I_p -{1 \over  \beta +\phi^\top_k  F_{k-1}\phi_k} F_{k-1} \phi_k \phi^\top_k \right) F_{k-1} \nonumber \\
			{\hat \theta}_{k+1} & ={\hat \theta}_{k}+\gamma {\Delta_k \over 1 + \Delta^2_k}[\caly_k -\Delta_k  \hat\theta_k ],\; \hat\theta_0=:\theta_0 \in \rea^q, \nonumber
		}
with initial conditions $\hat\eta_0=:\eta_{0} \in \rea^q$, $F_{-1}={1 \over f_0} I_p$, tuning gains $f_0>0,\; \beta \in (0,1]$ and $\gamma >0$,  and we used the definitions \eqref{aydelk}. Then, for all $f_0>0$,  $\eta _{0} \in \rea^q$ and $\theta _{0} \in \rea^q$, we have that \eqref{limpro} holds with all signals {\it bounded}. Moreover, the {\em individual} parameter errors verify the monotonicity condition
$$
|\tilthe_i(k_a)| \leq |\tilthe_i(k_b)|,\;\forall \,k_a \geq k_b \geq 0,\;i \in \bar q.
$$
\end{corollary}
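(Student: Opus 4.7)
The plan is to derive the corollary by specializing Proposition \ref{pro2} to the linear case $\calg(\theta) = \theta$. Under this choice Assumption A1 is satisfied trivially with $Q = I_q$ and $\rho = 1$, and Assumption A3 is satisfied with $\nu = 1$; by Lemma \ref{lem1}, which as remarked just above the corollary extends verbatim to DT sequences, identifiability of the LPRE is equivalent to IE of $\phi_k$, so Assumption A4 is in force. Therefore the full machinery of Proposition \ref{pro2} is at our disposal, and only its linear specialization needs to be spelled out.

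First I would reconstruct the extended LPRE. The matrix inversion lemma applied to \eqref{phit} gives $F_k^{-1} = \beta F_{k-1}^{-1} + \phi_k\phi_k^\top$, and combining it with the $\hat\eta$-update specialized to $\calg(\theta) = \theta$ produces the classical RLS identity $F_k^{-1}\tilde\eta_{k+1} = \beta F_{k-1}^{-1}\tilde\eta_k$ for $\tilde\eta_k := \hat\eta_k - \theta$. Iterating backward, using $F_{-1} = f_0^{-1} I_p$ together with the explicit solution $z_k = \beta^{k+1}$ of \eqref{zk}, yields the extended LPRE $(I_p - f_0 z_k F_{k-1})\theta = \hat\eta_k - f_0 z_k F_{k-1}\eta_0$. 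Multiplying by the adjugate produces the scalar regressions $\caly_i(k) = \Delta_k\,\theta_i$ for each $i \in \bar q$, and plugging these into \eqref{thek} with $Q = I_q$ collapses the coupled nonlinear dynamics of Proposition \ref{pro2} into the coordinate-wise scalar recursion
\[
\tilde\theta_{i,k+1} = (1 - \gamma \bar\Delta_k^2)\,\tilde\theta_{i,k}, \qquad i \in \bar q,
\]
where $\bar\Delta_k^2 := \Delta_k^2/(1+\Delta_k^2) \in [0,1)$.

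Convergence then follows directly from the IE-based argument at the end of the proof of Proposition \ref{pro2}: the explicit solution $F_k^{-1} = \beta^{k+1} f_0 I_p + \beta^k \sum_{j=0}^k \beta^{-j}\phi_j\phi_j^\top$ combined with \eqref{ass4} evaluated at $k = k_c$ shows that $I_p - f_0 z_k F_{k-1}$ is nonsingular for every $k \geq k_c + 1$, so $\bar\Delta_k$ is bounded away from zero on that tail; the scalar recursion above then delivers \eqref{parconk} exponentially, and boundedness of all signals is inherited from the standard RLS-with-forgetting bound on $F_k$. For the monotonicity claim, since $\bar\Delta_k^2 \leq 1$, whenever $\gamma$ is chosen in $(0,2]$ one has $|1 - \gamma \bar\Delta_k^2| \leq 1$, and telescoping between $k_b$ and $k_a \geq k_b$ yields $|\tilthe_i(k_a)| \leq |\tilthe_i(k_b)|$ componentwise. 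The only substantive step throughout is the IE-to-nonsingularity implication for $\Delta_k$, and that is already supplied by Proposition \ref{pro2}; the rest is routine because linearity decouples the error updates across coordinates.
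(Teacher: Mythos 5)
Your proposal is correct and follows essentially the same route as the paper: specialize Proposition \ref{pro2} to $\calg(\theta)=\theta$ (so Assumptions A1 and A3 hold trivially), invoke the DT version of Lemma \ref{lem1} to convert identifiability into IE, derive the extended LPRE and the decoupled scalar error recursion $\tilde\theta_{i,k+1}=(1-\gamma\bar\Delta_k^2)\tilde\theta_{i,k}$, and conclude convergence from the IE-to-nonsingularity argument and monotonicity from $|1-\gamma\bar\Delta_k^2|\le 1$. Your explicit remark that the monotonicity claim needs $\gamma\bar\Delta_k^2\le 2$ (hence $\gamma\le 2$, consistent with the condition \eqref{sig} specialized to the linear case) is a useful point the paper leaves implicit.
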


\begrem
\lab{rem2}
The importance of the {\em element-by-element} monotonicity property of the parameter error can hardly be overestimated. It played a key role for the relaxation of the assumption of known sign of the high frequency in model reference adaptive control \cite{GERORTNIK,WANetal} as well as in the solution of the adaptive pole placement problem \cite{PYRetal}. 
\endrem
%
\subsection{Switching parameters case}\label{sec:tv}
In this section, we consider the case of switched parameters estimation. Whereas the results are presented in DT only, similar results can be formulated for the CT case in a straightforward manner. Rewrite \eqref{nlprek} as
\begin{equation} \label{eq:lre:tv}
	y_k= \phi_k^\top \calg(\theta^*_{\sigma_k}),	
\end{equation}
where $\theta^*_{\sigma_k}$ denotes the switched unknown parameter vector with $\theta^*_{\sigma_k}\in\left\{\theta^*_1\right.$, $\left.\theta^*_2, \ldots, \theta^*_s\right\}$, $s\in\intnum_{>0}$. The switching signal $\sigma_k:\intnum_{\geq0}\to \bar{s}$ is a {\em known}\footnote{Such a scenario arises in several practical control scenarios, when the known switching signal $\sigma_k$ characterizes known changes in operation regimes \cite{LIBbook}.} piecewise-constant function defining the behavior of $\theta^*_{\sigma_k}$, i.e., $\theta^*_{\sigma_k} = \theta^*_i$ when $\sigma_k=i$, $i\in\bar{s}$. The known time instants when $\sigma_k$ changes its value are further denoted as $t_{r,i}$, $i\in\intnum_{\geq0}$.

The estimator \eqref{intestk}, \eqref{aydelk} is not capable of estimating switched parameters as for $\beta<1$ the sequence $z_k$ converges to zero, and for $\beta=1$ the LS estimator looses its alertness. To deal with switching parameters, we propose a resetting-based modification of the estimator \eqref{intestk}, \eqref{aydelk}:

\begin{subequations} \label{est:resetting}
	\begin{align}
		{\hat \eta}_{k+1} & =\hat \eta_k+\frac{1}{1 +\phi^\top_k F_{k-1}\phi_k} F_{k-1} \phi_k  (y_k -\phi^\top_k \hat\eta_k),
		\\
		{\hat \theta}_{k+1} & ={\hat \theta}_{k}+\gamma Q \frac{\Delta_k}{1 + \Delta^2_k}[\caly_k -\Delta_k \calg(\hat\theta_k) ],\; \hat\theta_0=:\theta_0 \in \rea^q,
		\\
		{F}_k & = 
		\begin{cases}
			{\mathcal N}_k F_{k-1}& \mbox{ if } t_k \ne t_{r,i}\; \forall i, \\
			\frac{1}{f_0}I_p & \mbox{ otherwise,} 
		\end{cases}
		\\
		{\mathcal N}_k&=\left(I_p -\frac{1}{1 +\phi^\top_k  F_{k-1}\phi_k} F_{k-1} \phi_k \phi^\top_k \right), \nonumber \\
		\psi_{k+1} & = 
		\begin{cases}
			\psi_{k} & \mbox{ if } t_k \ne t_{r,i}\; \forall i, \\
			{\hat \eta}_{k+1} & \mbox{ otherwise,} 
		\end{cases}, \quad \psi_0 = \eta_0,
	\end{align}
\end{subequations}
where  $\hat\eta_0=:\eta_{0} \in \rea^p$, $F_{-1}={1 \over f_0} I_p$, and 
\begin{subequations} \label{est_def:resetting}
	\begin{align}
		\Delta_k & =\det\{I_p- f_0 F_{k-1}\},
		\\
		\caly_k & := \adj\{I_p- f_0 F_{k-1}\} (\hat\eta_k -  f_0 F_{k-1} \psi_{k}). \label{calY:reset}
	\end{align}
\end{subequations}

Between the reseting instances $t_{r,i}$, the estimator \eqref{est:resetting}, \eqref{est_def:resetting} reproduces the estimator \eqref{intestk}, \eqref{aydelk} with $\beta=1$ and thus $z_k\equiv 1$. Then, at each reset instance $t_{r,i}$, the matrix $F_k$ is reset to its initial condition $F_{-1}$, and the state $\psi_k$ saves the value of $\hat{\eta}_k$. The state $\psi_k$ thus plays the same role as $\eta_0$ in \eqref{keyide}, compare \eqref{calY:reset} and \eqref{yk}. Following the properties of \eqref{intestk}, \eqref{aydelk}, the proposed estimator ensures the boundedness of the states and is capable of estimating $\theta^*_{\sigma_k}$ if the following assumption holds.

\noindent {\bf Assumption A5.} [Switching Interval Excitation].
The switching signal $\sigma_k$ is such that the regressor $\phi_k$ is IE between two subsequent switching instants. That is, there exist constants $C_d>0$ and $k_c> 0$ such that for any $i\in \intnum_{\ge0}$
\[
	t_{r,i}+k_c \le t_{r,i+1}
\]
and
\[
	\sum_{\ell=0}^{k_c} \phi_{t_{r,i}+\ell} \phi^\top_{t_{r,i}+\ell} \ge C_d I_p.
\]
\begrem
\lab{rem3}
In words, {\bf Assumption A5} means that the regressor satisfies the IE condition inside each subinterval $[t_{r,i},t_{r,i+1}]$. For simplicity we have taken that the constants $k_c$ and $C_d$ that appear in the definition of IE are the {\em same} for all subintervals $[t_{r,i},t_{r,i+1}]$, but this is clearly not necessary. 
\endrem
\section{Derivation of the Extended NLPRE \eqref{keyide1} via DRE}
\label{sec4}
%
To simplify the reading of the material presented in this section we refer the reader to Appendix A  where the procedure to derive DREM is recalled. 

In Proposition \ref{pro1} it is shown that the dynamic extension \eqref{thegt1} and \eqref{dotf} generates the extended NLPRE  \eqref{keyide1} 
to  which we apply the mixing step {\bf S4} of  Appendix A to generate the scalar NLPRE \eqref{ydelc}. In this section we prove that this extended NLPRE can also be derived directly applying the DREM step {\bf S2} of  Appendix A  for a suitably defined LTV operator $\calh$.\footnote{We refer the interested reader to \cite[Proposition 3]{WANetal} where the DREM operator $\calh$ for the G+D estimator reported in \cite[Proposition 2]{WANetal} is identified.} For the sake of brevity we only consider the CT case, with the DT case following {\em verbatim}.

\begin{proposition}\em
		\lab{pro3}
Define the state space realization of the LTV operator $\calh:u \to U$ used in step {\bf S2} of Appendix A as in \eqref{dotu} with 
$$
A(t):=-\alpha F(t) \phi(t) \phi^\top(t),\;b(t):=\alpha F(t) \phi(t),
$$
with $F(t)$ defined in \eqref{dotf}. Starting from the NLPRE $y(t) =\phi^\top(t) \calg(\theta)$, construct $Y(t) \in \rea^p$ and $\Phi(t) \in \rea^{p \times p}$ via \eqref{yphiapp} that is, as the solutions of the dynamic extension
\begsubequ
\lab{nlpren}
\begali{
\dot Y(t) &=-\alpha F(t)  \phi (t) \phi^\top(t)  Y(t)+\alpha F(t)  \phi(t)  y(t),\\
\dot \Phi(t) &=-\alpha F(t)  \phi (t) \phi^\top(t)  \Phi(t) + \alpha F(t)  \phi (t) \phi^\top(t),
}
\endsubequ
and initial conditions $Y(0)={\bf 0}_{p \times 1}$ and $\Phi(0)={\bf 0}_{p \times p}$.
\begenu
\item The extended NLPRE $Y(t) =\Phi(t)\calg(\theta)$ holds. 
\item The signals $Y(t) $ and $\Phi(t) $ satisfy \eqref{ylsd} and \eqref{philsd}, respectively, with $\hat \eta(t)\in \rea^p$ and $F(t) \in \rea^{p \times p}$ solutions of the differential equations \eqref{thegt1} and \eqref{dotf}, respectively.
\endenu
\end{proposition}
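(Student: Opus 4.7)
The proof has two parts and both reduce to uniqueness arguments for linear time-varying ODEs, so the strategy is to compute derivatives and match initial conditions rather than to integrate anything explicitly.

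For Part (i), the plan is to introduce the error signal $E(t) := Y(t) - \Phi(t)\calg(\theta)$ and show that it is identically zero. Differentiating $E$ using the two equations in \eqref{nlpren} and substituting $y(t) = \phi^\top(t)\calg(\theta)$ from \eqref{nlprect} should produce a clean cancellation: the forcing term $\alpha F(t)\phi(t)y(t)$ combines with $-\alpha F(t)\phi(t)\phi^\top(t)\calg(\theta)$ to vanish, leaving only the homogeneous part $\dot E(t) = -\alpha F(t)\phi(t)\phi^\top(t) E(t) = A(t) E(t)$. Since $Y(0) = 0$ and $\Phi(0) = 0$ give $E(0) = 0$, uniqueness of solutions yields $E(t) \equiv 0$, which is exactly the extended NLPRE $Y(t) = \Phi(t)\calg(\theta)$.

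For Part (ii) I would verify the two closed-form expressions separately. First, define $W(t) := I_p - z(t) f_0 F(t)$ and differentiate, using $\dot z = -\beta(t) z$ from \eqref{dotzet} and $\dot F = -\alpha F\phi\phi^\top F + \beta(t) F$ from \eqref{dotf}. The two $\beta(t)$-terms should exactly cancel, leaving $\dot W(t) = \alpha z(t) f_0 F(t)\phi(t)\phi^\top(t) F(t) = \alpha F(t)\phi(t)\phi^\top(t)(I_p - W(t))$, which matches the second equation of \eqref{nlpren}. At $t=0$ one has $W(0) = I_p - f_0 \cdot (1/f_0) I_p = 0 = \Phi(0)$, so uniqueness gives $\Phi(t) = W(t)$, which is \eqref{philsd}. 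Similarly, define $V(t) := \hat\eta(t) - z(t) f_0 F(t) \eta_0$; differentiating and using \eqref{thegt1}, \eqref{dotf}, \eqref{dotzet} the $\beta(t)$-terms again cancel, yielding $\dot V(t) = -\alpha F(t)\phi(t)\phi^\top(t) V(t) + \alpha F(t)\phi(t) y(t)$, which is the first equation of \eqref{nlpren}, with $V(0) = \eta_0 - \eta_0 = 0 = Y(0)$. Uniqueness then gives \eqref{ylsd}.

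I do not anticipate any genuine obstacle here; the result is essentially a bookkeeping verification that the LTV operator $\calh$ with the specified $A(t),b(t)$ reproduces, when applied to \eqref{nlprect}, the same $(Y,\Phi)$ that arise from the LS update with bounded-gain forgetting factor. The only place where one has to be slightly careful is in arranging the $\beta(t)$-cancellation so that the structural dependence on the forgetting factor disappears from the extended regressor dynamics; this is what makes the identification of $\calh$ possible without explicitly invoking GPEBO. Once the two differential-equation matchings are in place, the conclusion is immediate by uniqueness and completes the DREM interpretation of the LS+D construction.
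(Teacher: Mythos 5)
Your proposal is correct and follows essentially the same route as the paper: part (i) is the linearity of the operator $\calh$ (which you spell out via the error signal $E(t)$ and uniqueness of solutions), and part (ii) is exactly the paper's computation---differentiate the closed-form candidates $I_p - z(t)f_0F(t)$ and $\hat\eta(t) - z(t)f_0F(t)\eta_0$, observe the cancellation of the $\beta(t)$-terms, and match the dynamics and zero initial conditions of \eqref{nlpren}. The only cosmetic difference is that you make the uniqueness step explicit where the paper leaves it implicit.
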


\begin{proof}
The fact that the extended NLPRE $Y(t)=\Phi(t)\calg(\theta)$ holds follows trivially from linearity of the operator $\calh$. 

To prove the claim (ii) we invoke \eqref{ylsd} and do the following calculations 
\begalis{
\dot Y(t)&=\dot{\hat \eta}(t) -z(t) \dot F(t) \eta _{0}- \dot z(t)f_0 F(t) \eta _{0}\\
&=\alpha F(t)\phi(t) [y(t)  -\phi^\top(t)    \hat\eta(t)]+ \beta(t) z(t)f_0 F(t) \eta _{0}\\
&\;\;\;+z(t)[ \alpha F (t) \phi(t) \phi^\top (t)   F(t) {- \beta(t) F(t) }] \eta _{0} \\
&=\alpha F(t)\phi(t) y(t)-\alpha F(t) \phi (t)\phi^\top(t) [ \hat\eta(t)-z(t) f_oF(t) \eta _{0}]\\
&=A(t)Y(t)+b(t)y(t).
}
In the same spirit as above we compute the time derivative of $\Phi(t)$ as defined in \eqref{philsd} to get
\begalis{
\dot \Phi(t)&=-\dot z(t)f_o  F(t) -  z(t)f_0 \dot  F(t) \\
&=  \alpha z(t) f_o F(t)\phi(t) \phi^\top (t)   F(t)\\
&=\alpha F(t)\phi(t) \phi^\top(t) -\alpha F(t) \phi(t) \phi^\top(t) [I_q-z(t) f_oF(t) ]\\
&=A(t)\Phi(t)+b(t)\phi^\top(t).
}
This completes the proof.
\end{proof}

\begrem
\lab{rem4}
It is important to note that the relation $Y(t)= \Phi(t)\calg(\theta)$ imposes the constraint $Y(0)= \Phi(0) \calg(\theta)$, which is satisfied with the zero initial conditions imposed  in Proposition  \ref{pro3}. As expected, this choice is consistent with the choice of initial conditions for $\hat \eta(t)$ and $F(t) $ given in Proposition  \ref{pro1}.
\endrem

\begrem
\lab{rem5}
The dynamic extension \eqref{dotf}and  \eqref{nlpren} provides an alternative to the construction of the proposed estimator. The relationship between the two implementations boils down to a standard diffeomorphic change of coordinates.  Indeed, while the state of the system in  \eqref{intestt1} and \eqref{dotzet} is given by $\col(\hat \eta(t),\vecop(F(t)),$ $z(t),\hat \theta(t)) \in \rea^{(p+p^2+1+q)}$, the state of the system of Proposition \ref{pro3} is  $\col(Y(t)$, $\vect(\Phi(t)), z(t),\hat \theta(t))\in \rea^{(p+p^2+1+q)}$, and the first two components are related by a simple invertible coordinate change
$$
\begmat{\hat \eta(t) \\ \vect(F(t))}=\begmat{Y(t) +[I_p-\Phi(t)] \eta_0\\ {1 \over z(t)f_0}\vect(I_p-\Phi(t))}.
$$
However, the original implementation \eqref{intestt1} clearly reveals the mechanism underlying the operation of the estimator, namely, the use of a classical LS update and the creation of the extended NLPRE exploiting the well-known property of LS \eqref{prols}.\footnote{To the best of the authors' knowledge, this property  was first reported in \cite[equation (17)]{DEL} and was widely used for the implementation of projections in indirect adaptive controllers \cite{LOZZHA}.}
\endrem
\section{Robustness Analysis of the CT LS+D Estimator}
\lab{sec5}
%
In this section we analyze the robustness {\em vis-\`a-vis} additive perturbations of the CT LS+D estimator of Proposition \ref{pro1}. That is, we consider the {\em perturbed} NLPRE
\begequ
\lab{pernlpre}
  y(t)=   \phi^\top(t)  \calg(\theta) + d(t),
\endequ
where $d(t)$ represents an additive perturbation signal. This signal may come from additive noise in the measurements of $y(t)$ and $\phi(t)$ or time variations of the parameters, that is, $d(t)$ may be decomposed as
$$
d(t)=d_y(t) + d^\top_\theta(t) \phi(t) + d^\top_\phi(t) \calg(\theta),
$$
where $d_y(t)\in \rea$ and  $d_\phi(t) \in \rea^p$ represent the measurement noise added to $y(t)$ and $\phi(t)$, respectively, and $d_\theta(t) \in \rea^p$ captures {\em time variations} in the parameters. We make the reasonable assumption that these signals are all bounded and prove that the CT LS+D estimator defines a {\em bounded-input-bounded-state} (BIBS) stable system.

The main result is summarized in the proposition below.

\begin{proposition}
\lab{pro4}\em
Consider  the {\em perturbed} NLPLPRE \eqref{pernlpre} with $d(t)$ a bounded signal. Assume the regressor $\phi(t)$ is IE. The LS+D estimator of Proposition \ref{pro1} applied to this NLPRE is {\em BIBS stable}.
\end{proposition}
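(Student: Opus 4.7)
The plan is to retrace the proof of Proposition~\ref{pro1} in the perturbed setting, showing that each state of the composite estimator---$\hat\eta$, $F$, $z$ and $\hat\theta$---stays bounded whenever $d(t)$ is bounded. Since the $F$- and $z$-dynamics do not depend on $d$ at all, the bounds $z(t)\in[0,1]$ and $\|F(t)\|\le M$ derived in the proof of Proposition~\ref{pro1} carry over verbatim, so only the boundedness of $\hat\eta$ and $\hat\theta$ must be re-established. After that, the DREM construction already used in Proposition~\ref{pro1} can be reused to build the perturbed scalar regression equation that drives $\hat\theta$.

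For the LS component, set $\tilde\calg := \hat\eta-\calg(\theta)$. The perturbed error equation is
\[
\dot{\tilde\calg}(t) = -\alpha F(t)\phi(t)\phi^\top(t)\tilde\calg(t) + \alpha F(t)\phi(t) d(t),
\]
and repeating the calculation of $(d/dt)[F^{-1}\tilde\calg]$ together with Young's inequality gives, for the Lyapunov function $V_1 := \frac{1}{2}\tilde\calg^\top F^{-1}\tilde\calg$, the dissipation inequality $\dot V_1 \le -\beta(t) V_1 + \frac{\alpha}{2} d^2(t)$. Combined with the lower bound $F^{-1}(t) \ge \alpha\rho C_c I_p$ for $t\ge t_c$ (the bound used at the end of the proof of Proposition~\ref{pro1}, which relies only on {\bf Assumption A2}), a bound on $V_1$ can be converted into a bound on $|\tilde\calg(t)|$ in terms of $\|d\|_\infty$. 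I expect this to be the main technical obstacle: since the bounded-gain forgetting factor $\beta(t)$ may vanish whenever $\|F(t)\|$ approaches $M$, the dissipation inequality alone does not give boundedness of $V_1$, and the IE assumption has to be invoked explicitly to keep $F^{-1}(t)$ away from zero and compensate.

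Once $\hat\eta$ is known to be bounded, the perturbed version of the extended regression \eqref{extnlpre} reads $Y(t)=\Phi(t)\calg(\theta)+w(t)$ with $w(t) := \tilde\calg(t) - z(t) f_0 F(t)\tilde\calg(0)$, which is bounded because each of its ingredients is. Multiplying by $\adj\{\Phi(t)\}$ as in the mixing step of Proposition~\ref{pro1} produces the scalar NLPRE $\caly(t) = \Delta(t)\calg(\theta) + \tilde w(t)$ with $\tilde w := \adj\{\Phi\}\, w$ also bounded. Crucially, $\Delta(t)$ is a function of $F$ and $z$ alone and does not depend on $d$, so the argument at the end of the proof of Proposition~\ref{pro1} still guarantees that $\Delta(t)$ is PE.

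Substituting this perturbed $\caly$ into \eqref{thet1c} gives $\dot{\tilde\theta} = -\gamma\Delta^2 Q[\calg(\hat\theta)-\calg(\theta)] + \gamma Q\Delta\tilde w$. Computing the derivative of $V := \frac{1}{2\gamma}|\tilde\theta|^2$, invoking monotonicity \eqref{monpro} for the first term and Young's inequality for the cross term, yields
\[
\dot V \le -\frac{\rho}{2}\Delta^2(t)|\tilde\theta|^2 + \frac{\|Q\|^2}{2\rho}|\tilde w(t)|^2.
\]
Since $\Delta$ is PE and $\tilde w$ is bounded, a standard ISS argument for systems whose ``gain'' is a PE signal delivers boundedness of $\tilde\theta$, completing the BIBS claim.
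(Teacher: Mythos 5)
Your proposal is correct and follows essentially the same route as the paper: the paper isolates the disturbance response $x_d(t)=\calh[d](t)$ of the operator of Proposition \ref{pro3} (which is exactly your $w(t)=\tilde\calg(t)-z(t)f_0F(t)\tilde\calg(0)$), bounds it with the same Lyapunov function $W=\hal x_d^\top F^{-1}x_d$ and the same inequality $\dot W\le-\beta(t)W+{\alpha\over 2}d^2(t)$, then passes through the mixing step to the perturbed scalar NLPRE and closes with the identical estimate $\dot V\le-\rho\gamma\Delta^2(t)V+{1\over 2\rho}|Q\xi(t)|^2$ together with PE of $\Delta(t)$. The one place you diverge is in candor rather than substance: the step you flag as the main technical obstacle---that $\beta(t)$ is only guaranteed nonnegative and may vanish as $\|F(t)\|$ approaches $M$, so the dissipation inequality alone does not yield boundedness of $W$---is dispatched by the paper in a single line asserting that $\beta(t)>0$ and boundedness of $d(t)$ suffice, with no further argument.
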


\begin{proof}
In the light of Remark \ref{rem3}, to carry out the proof we rely on the use of the alternative implementation of the extended NLPRE of Proposition \ref{pro3}. Applying the operator $\calh$ of Propositions \ref{pro3}  to the perturbed NLPRE \eqref{pernlpre} yields the perturbed version of the extended LPRE \eqref{extnlpre} as
\begin{equation}\label{eq:per-LPRE}
Y(t)=\Phi (t)\calg(\theta) + \calh[d](t),
\end{equation}
where we exploited the property of linearity of $\calh$. Next we proceed to show that the operator $\calh$ is BIBO-stable. This is done by proving that, for all bounded $d(t)$, the signal $\calh[d](t)$ is also bounded.

The signal $\calh[d](t)$ is generated via the CT LTV system
\begalis{
\dot x_d(t)& = -\alpha  F  \phi(t) \phi^\top (t) x_d(t) + \alpha F(t)\phi(t)d(t)\\
\calh[d](t)&=x_d(t).
}
Defining $W(x_d) :=\hal  x_d^\top(t) F^{-1}(t) x_d(t)$, we have
\begalis{
\dot W & =-\beta(t) W(t)-{\alpha \over 2}[\phi^\top(t)x_d(t) - d(t) ]^2+ {\alpha \over 2} d^2(t)\\
& \leq -\beta(t) W(t)+ {\alpha \over 2} d^2(t).
}
As  $\beta(t)>0$ and $d(t)$ is bounded, this proves that $x_d(t)=\calh[d](t)$, is also bounded. 

From the analysis above, we conclude that the operator $\calh$ is BIBO-stable. Consequently, since $\phi(t)$ and $y(t)$ are bounded, it follows that $Y(t)=\calh[y](t)$ and   $\Phi(t)=\calh[\phi^\top](t)$ are also bounded. It only remains to prove that  and $\hat \theta(t)$ is bounded. Whence, multiplying \eqref{eq:per-LPRE} by   $\adj\{\Phi(t)\}$ we get the following perturbed NLPRE
\begequ
\lab{ydelper}
\caly(t) = \Delta(t)\calg(\theta)+\xi(t),
\endequ
where we defined the signal
\begequ
\lab{pertilthe}
\xi(t):= \adj\{\Phi(t)\}x_d(t).
\endequ
We notice that this signal is bounded. Replacing \eqref{pertilthe} in the estimator \eqref{thet1c} yields 
\begalis{
\dot{\hat \theta}(t)  & =-{ \gamma \Delta^2(t) }Q[ \calg(\hat\theta(t) ) - \calg(\theta)] +{ \gamma \Delta(t) }Q\xi(t).
}
Computing the derivative of  the Lyapunov function candidate \eqref{lyafunv} we get
\begalis{
\dot V(t)   & =  - \Delta^2(t) [ \hat \theta(t)- \theta]^\top Q {[ \calg(\hat \theta(t) ) - \calg(\theta)]} \\
&\;\;\;+  \Delta(t) \tilde \theta^\top(t) Q\xi(t)\\
& \leq   - \rho {\Delta}^2(t) | \tilde \theta(t)|^2 +  \Delta(t) |\tilde \theta(t)|| Q\xi(t)| \\
& =   - {\rho \over 2}{\Delta}^2(t) | \tilde \theta(t)|^2  - {\rho \over 2}[{\Delta}(t) | \tilde \theta(t)| -{1 \over \rho}| Q\xi(t)|]^2\\
&\;\;\;+{1 \over 2\rho}| Q\xi(t)|^2\\
& \leq    - \rho \gamma{\Delta}^2(t)  V(t) +{1 \over 2\rho}| Q\xi(t)|^2.
}
The proof of boundedness of $\tilde \theta(t)$ is completed recalling that in Proposition \ref{pro1} it is shown that $\Delta(t)$ is PE.
\end{proof}
\section{Simulation Examples}
\lab{sec6}
In this section we present simulations of the proposed CT and DT estimators  using different examples recently reported in the literature.

\subsection{Example 5 of \cite{MARTOM}}
Consider the second order stable, CT, linear system described by 
\begin{align*}
\dot x_1(t)=&x_2(t) \\
\dot x_2(t)=& -\theta_1 x_1(t) -\theta_2 +\theta_3 u(t) \\ y(t)=&x_1(t),
\end{align*}
or equivalently 
\begin{equation}
\ddot x_1(t) = -\theta_1  x_1(t)-\theta_2 \dot x_1(t)  +\theta_3 u(t)
\label{ex5MT}
\end{equation}
where $\theta_1$, $\theta_2$ and $\theta_3$ are unknown parameters. Applying the filter 
$$
H(\calp)={1 \over {\calp+\lambda}}
$$
where $\calp:=\frac{d}{dt}$,  to both sides of \eqref{ex5MT} and rearranging the terms, we get the LPRE \eqref{lrect} with 
\begin{equation}
\label{yphi}
y(t)=  \calp H({\calp}) [ x_2](t), \quad \phi(t)=H({\calp})[\col( -x_1(t), \, -\calp x_1(t), \, u(t) )],
\end{equation}
and $\theta:=\col(\theta_1, \, \theta_2, \, \theta_3)$.

To carry out the simulations we use the same conditions that \cite{MARTOM}, that is, we set to zero the initial conditions of the filters, as well as the initial value of the parameter estimation vector ${\hat \theta} (0)=0$, $\hat \eta(0)=\col(0.1,\,0.1, \, 0.1)$, $u(t)=5$ and fix $\theta=\col(2,3,1)$. Besides, the tuning parameters of the proposed estimator of Corollary \ref{cor1} were  $\alpha=20.3$, $f_0=4$, $\beta=0.07$ and $\gamma=700$. In Fig. \ref{fig2} we appreciate the transient behavior of the estimated parameters, which clearly shows the estimation of the real values. This result should be contrasted with the non-converging behavior of the estimates reported in  \cite{MARTOM} with the gradient scheme and their modified gradient.

\begin{figure}[htp]
\centering
  \includegraphics[width=.85\linewidth]{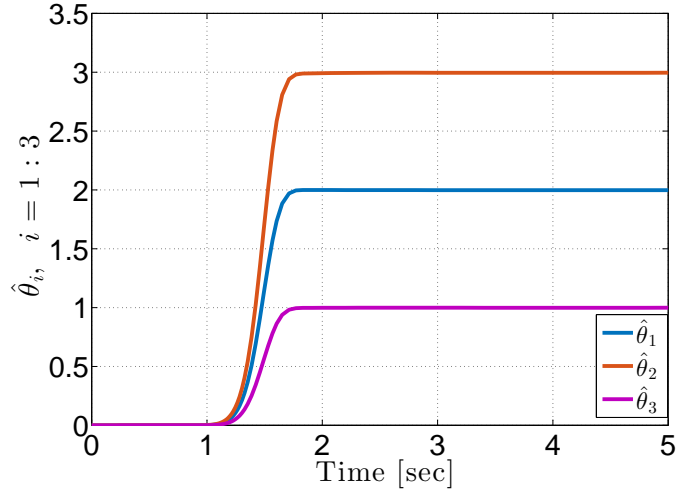}  
  \caption{Transient behavior of the estimated parameters  $\hat \theta_i(t)$ with $i=1,2,3$.}
 \label{fig2}
\end{figure}

To illustrate the use of the NLPRE \eqref{nlprect}, we notice that from the proposed values for $\theta$, we have that $\theta_3$ can be rewritten as $\theta_3=\theta_2-\theta_1$. Hence, after the application of the filter $H(\calp)$, the system \eqref{ex5MT}  can be written as the NLPRE \eqref{nlprect} with ${\mathcal G} (\theta):= \col(\theta_1,\, \theta_2, \, \theta_2-\theta_1)$, Thus, using the same initial conditions,  estimator gains and verifying {\bf Assumption A1} with 
$$
Q= \left[\begin{array}{ccc}  1  & 0 & 0 \\ 0   & 1 & 0 \end{array}\right],
$$ 
and $\rho=1$, we carry out a simulation to estimate only $\theta_1$ and $\theta_2$ with the estimator of Proposition \ref{pro1}. Fig. \ref{fig3} shows the transient behavior of the estimated parameters, showing again parameter convergence.

\begin{figure}[htp]
\centering
  \includegraphics[width=.85\linewidth]{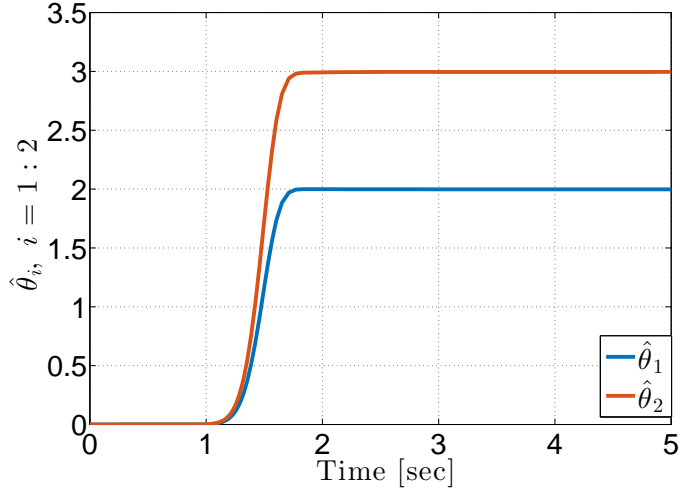}  
  \caption{Transient behavior of the estimated parameters  $\hat \theta_i(t)$ with $i=1,2$ using the NLPRE \eqref{nlprect}.}
 \label{fig3}
\end{figure}

\subsection{Example  4 of \cite{GOEBRUBER}}

Consider the first order linear system 
\begin{equation}
\label{ex4BER}
y_k= \frac{\theta_2}{ \calq - \theta_1} u_k,
\end{equation}
 where $\calq$ is the forward-shiff operator and $\theta_1$ and $\theta_2$ are unknown parameters.  After some simple calculations, we have that \eqref{ex4BER} can be written as  a LPRE $y_k=\phi_k^\top \theta$ with 
 $$
 \phi_k=\col(y_{k-1}, \, u_{k-1}), \quad \theta=\col(\theta_1, \, \theta_2).
 $$
 To carry out the simulations we have also used the same initial conditions and parameters  of \cite{GOEBRUBER}, that is, $\theta_1=0.4$, $\theta_2=0.8$, $\theta_0=\col(0,\,0)$ and the input signal $u_k=1$.\footnote{We notice that there is an unfortunate typo in the definition of $u_k$ in \cite[Example  4]{GOEBRUBER}.}  The tuning gains of the estimator of Corollary \ref{cor2} were chosen as $\beta=1$, $f_0=0.14$, $\gamma=0.4$ and initial conditions ${\hat \eta}_0=\col(1,\,1)$.  It is important to note that for this system \eqref{ex4BER}, the estimator proposed in \cite{GOEBRUBER} only ensures the boundedness of $\hat \theta_i$, with $i=1,2$ (see Fig. 3 of \cite{GOEBRUBER}).  This should be contrasted with our estimator, which, as can be seen in Fig. \ref{fig4}, converges to the real value. 

\begin{figure}[htp]
\centering
  \includegraphics[width=0.82\linewidth]{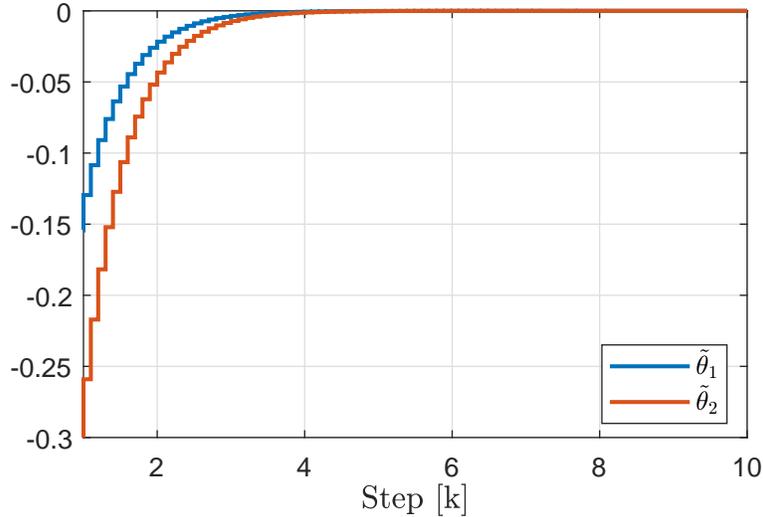}  
  \caption{Transient behavior of the parameter estimates  $\hat \theta_i$ with $i=1,2$.}
 \label{fig4}
\end{figure}

\subsection{Example 8 of \cite{NIBK}}
We consider the DT system 
$$
y_{k} = -0.5y_{k-1} + 0.1y_{k-2} + u_{k-1} - 0.4u_{k-2},
$$
which switches for $k \ge k_c$ to 
$$
y_{k} = 1.4y_{k-1} - 0.3y_{k-2} + u_{k-1} - 1.3u_{k-2}.
$$
Note that for $k\ge k_c$ the plant is unstable and not minimum-phase. The initial conditions are $y_{-1}=-0.2$, $y_{-2} = 0.4$, and $u_{-1}=u_{-2} = 0$, and $k_c=50$.

The system can be written in the form \eqref{eq:lre:tv} setting $\calg(\theta^*_{\sigma_k}) = \theta^*_{\sigma_k}$,
\[
	\begin{gathered}
		\phi_k := \begin{bmatrix} -y_{k-1} & -y_{k-2} & u_{k-1} & u_{k-2} \end{bmatrix}^\top, \\
		\theta^*_1 := \begin{bmatrix} 0.5 & -0.1 & 1 & -0.4 \end{bmatrix}^\top, \\
		\theta^*_2 := \begin{bmatrix} -1.4 & 0.3 & 1 & -1.3 \end{bmatrix}^\top,
	\end{gathered}
\]
and 
\[
	\sigma_k = \begin{cases}
		1 & \mbox{for } k < k_c, \\
		2 & \mbox{for } k \ge k_c,
	\end{cases}
\]
which corresponds to $t_{r,0}=0$ and $t_{r,1} = k_c$.

In this example, we consider the indirect adaptive poles placement for the reference tracking, where the reference signal is denoted as $r_k$. Then the control signal $u$ is given by
\[
	u_k = c_{1,k}y_k + c_{2,k} y_{k-1} + c_{3,k} u_{k-1} + c_{4,k} r_k,
\]
where the time-varying coefficients $c_{1,k}$, $c_{2,k}$, $c_{3,k}$, and $c_{4,k}$ are computed based on the current parameter estimate $\hat{\theta}_k$ to provide the desired poles and unit gain of the closed-loop system; if for a value of $\hat{\theta}_k$ the computations are ill-conditioned, then $u_k=0$ is chosen.  For this example, the desired poles for this are $e^{-1}$, $e^{-0.5+0.86\sqrt{-1}}$, and $e^{-0.5-0.86\sqrt{-1}}$, and the reference signal is $r_k\equiv 1$.

To estimate the parameters, we apply the resetting-based estimator \eqref{est:resetting}, \eqref{est_def:resetting}, where we set $f_0=0.4$, $\gamma= 500$, $\eta_0 = 0$, and $\theta_0 = \begin{bmatrix} 0.1 & -0.3 & 0.5 & -0.05 \end{bmatrix}$. Note that $\theta_0$ cannot be chosen zero as such a choice yields zero input to the system and the regressor $\phi$ is not IE; for a nonzero choice of $\theta_0$, the interval excitation is provided by the transients of the plant. 

The simulation results are depicted in Fig.~\ref{fig:transients:y} for the output signal $y$ and in Fig.~\ref{fig:transients:theta} for the estimation errors $\tilde{\theta}_k = \hat{\theta}_k - \theta^*_{\sigma_k}$. It can be observed that after the switch, parameters estimation errors remain almost constant for approximately $30$ steps, and then quickly converge. Further investigation shows that the regressor $\phi$ is not exciting on this initial interval, and thus the estimation does not progress. As soon as the IE condition is satisfied, the estimates $\hat{\theta}_k$ converges to the true value $\theta^*_2$.

%
%
%
%

\begin{figure}[htb]
	\subfloat[Initial transients]{%
		\includegraphics[width=0.48\linewidth]{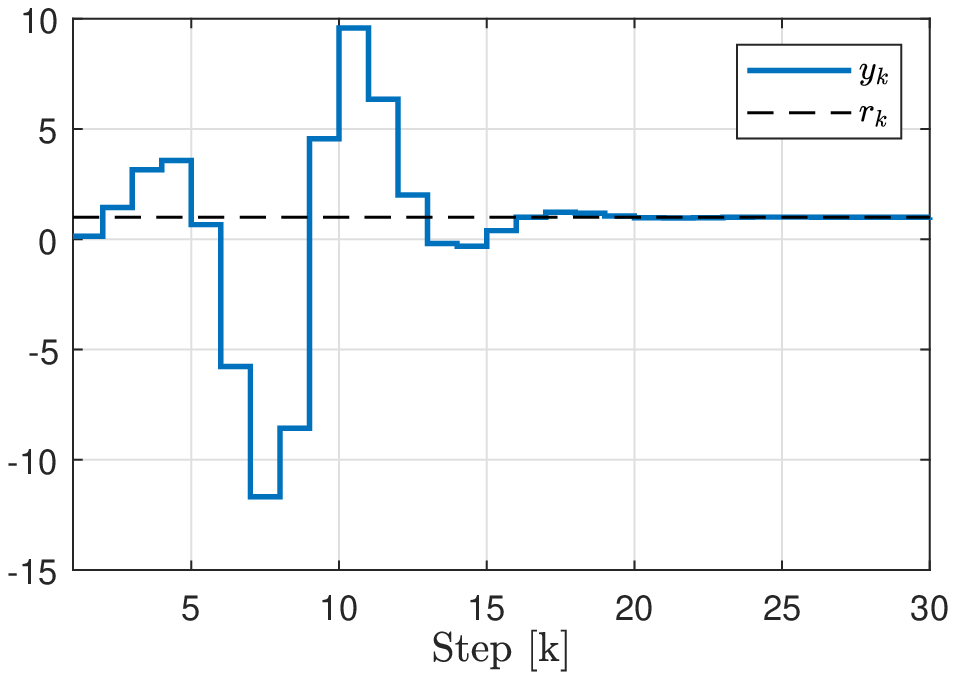}
	}
	~
	\subfloat[Transients after the switch, $k_c = 50$]{%
		\includegraphics[width=0.48\linewidth]{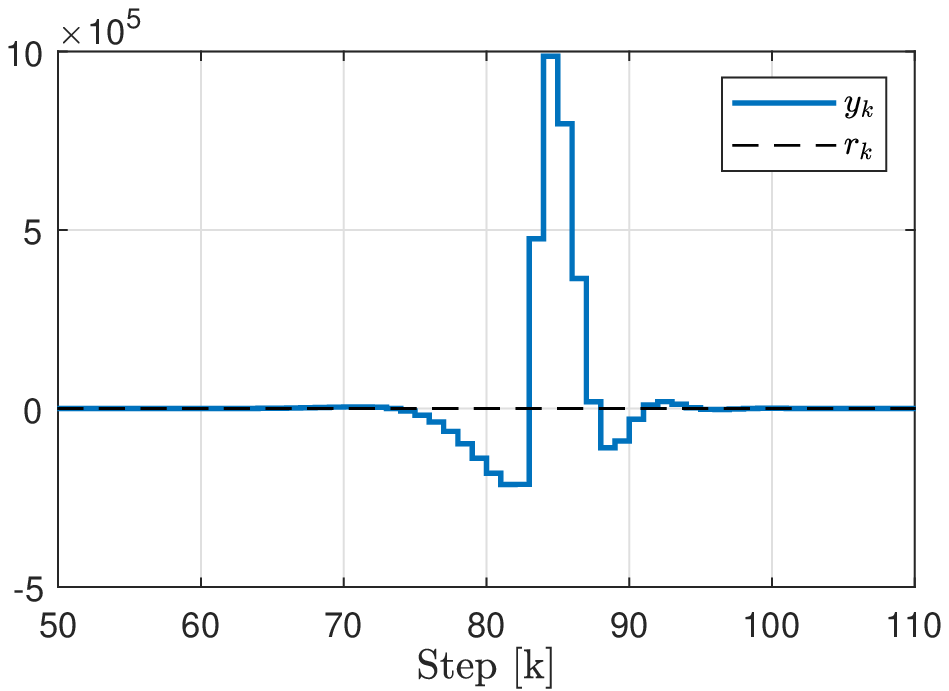}
	}
	\caption{Transients of $y_k$.}
	\label{fig:transients:y}
\end{figure}
\begin{figure}[htb]
	\subfloat[Initial transients]{%
		\includegraphics[width=0.48\linewidth]{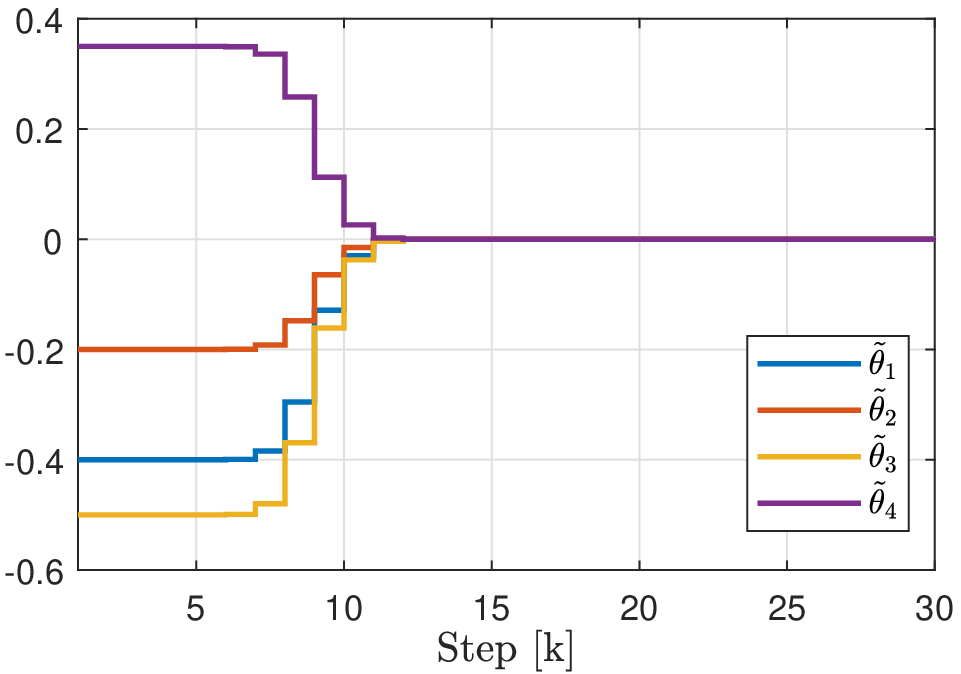}
	}
	~
	\subfloat[Transients after the switch, $k_c = 50$]{%
		\includegraphics[width=0.48\linewidth]{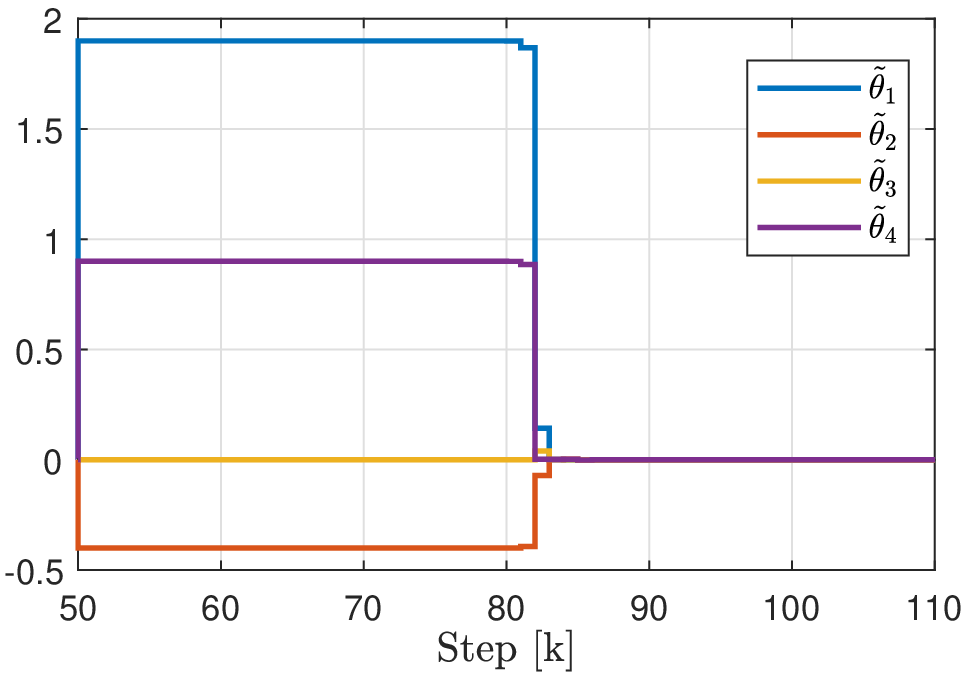}
	}
	\caption{Transients of $\tilde{\theta}$.}
	\label{fig:transients:theta}
\end{figure}

\section{Concluding Remarks}
\lab{sec7}
We have presented in this paper a new robust DREM-based parameter estimator that proves global exponential convergence of the parameter errors with the weakest excitation assumption, namely, identifiability of  the LPRE---which is, actually {\em necessary} for the off- or on-line estimation of the parameters. The main features of the estimator are: (i) it relies on the use of a high performance LS search, in contrast to the usually slower gradient descents; (ii) it ensures component-wise monotonicity of the parameter estimation errors; (iii) it incorporates a forgetting factor avoiding the well-known covariance wind-up problem of LS; (iv) it is applicable to NLPRE, which are separable and monotonic as well as to switching parameters; (v) it constructs the extended regressor avoiding the use of the computationally demanding GPEBO technique, exploiting instead the key structural property of the LS estimator captured in \eqref{prols}; and (vi) CT and DT implementations of the estimator are given. Several simulation results, borrowed from the literature, show the superior performance of the proposed estimator. 

\begcen {\bf  Acknowledgments} \endcen
%
The first author is grateful to Dr. Michelangelo Bin for a detailed explanation of his important results reported in \cite{BIN} and to Prof. Han-Fu Chen for bringing to his attention the use of the mixing step of DREM in \cite{CHEZHAbook}. He also thanks Dr. Lei Wang for his help in the  derivation of the CT version of the LS+D with a forgetting factor.
%
 \begcen{\bf  CRediT authorship contribution statement} \endcen
All authors equally contributed to the paper.


\appendix
\section*{Appendix A}
%
In this appendix we briefly review the main steps in the construction of DREM-based estimators  proceeding from the NLPRE \eqref{nlprect}. For the sake of brevity we restrict ourselves to CT versions, with the DT ones constructed {\em verbatim}. The interested reader is refered to \cite{ORTetaltac} for further details on these constructions.

\subsection*{Derivation of classical DREM-based estimators}

\begenu[{\bf S1}]
\item Starting from the NLPRE $y(t)=\phi^\top(t)\calg(\theta)$, with $y(t) \in \rea,\;\phi(t)\in \rea^p$ measurable signals, $\calg:\rea^q \to \rea^p,\;p\geq q$ and $\theta \in \rea^q$ a constant vector of unknown parameters.
\item (Creation of the extended regressor) Inclusion of a {\em free, stable, linear operator} $\calh:u(t) \to U(t)$, with $u(t) \in \rea$ and $U(t) \in \rea^p$, via its state space realization 
\begali{
\lab{dotu}
\dot U(t) & =A(t)U(t)+b(t)u(t),
}
with $A(t) \in \rea^{p \times p},\;b(t)\in \rea^p$. Upon application to the NLPRE above, create a new extended NLPRE 
\begequ
\lab{extregequ}
Y(t)= \Phi(t)\calg(\theta)
\endequ
with
\begali{
\nonumber
Y(t)&:=\calh[y](t) \in \rea^p\\
\lab{yphiapp}
\Phi(t)&:=[\calh[\phi_1](t) \;|\; \calh[\phi_2](t) \;|\;\dots  \;|\;\calh[\phi_p](t)] \in \rea^{p \times p}.
} 
We underscore the fact that the new extended regressor $\Phi(t)$ is a {\em square matrix}.
\item (Lion's and Kreisselmeier REs) For Lion's RE \cite{LIO} we select for $\calh$ the LTI filter 
$$
A:= \diag\{-a_i\},\; b:=\col(b_1,\dots,b_p),
$$
with $\;b_i \neq b_j,a_i \neq a_j>0,\;(i,j) \in \bar p$.

For Kreisselmeier RE we select LTV operators with
$$
A:= \diag\{-a_i\},\; b(t):=\phi(t),\;a_i >0,\;i \in \bar p.
$$
\item (Mixing step) Multiplication of the extended LPRE \eqref{extregequ} by the {\em adjugate} of $\Phi(t)$ to create the new NLPRE  
\begequ
\lab{scanlpre}
\caly(t) = \Delta(t) \calg(\theta),
\endequ
with
\begalis{
\caly(t)&:=\adj\{\Phi(t)\}Y(t) \in \rea^{p}\\
\Delta(t)&:=\det\{\Phi(t)\} \in \rea
}
and {\em scalar} regressor $\Delta(t)$. Notice that in the case of LPRE $y(t)=\phi^\top(t) \theta$ we obtain $q$ scalar  LPREs of the form
$$
\caly_i(t) = \Delta(t) \theta_i,\;i \in \bar q.
$$
\endenu

\end{document}